\renewcommand{\theequation}{\thesection.\arabic{equation}}
\newtheorem*{theorem*}{Theorem}
\newtheorem*{definition*}{Definition}
\newtheorem{definition}{Definition}[section]
\newtheorem{theorem}{Theorem}[section]
\newtheorem{lemma}[theorem]{Lemma}
\newtheorem{proposition}[theorem]{Proposition}
\def \r{\rho}
\def \i{\Im}
\def \th{\theta}
\def \b{\beta}
\def \a{\alpha}
\def \p{\partial}
\def \eps{\varepsilon}
\def \grad{\nabla_{\! x}}
\def \LAP{\Delta_{\!x}}
\def \pO{\partial\Omega}
\begin{document}

\title[Prandtl Equation with Robin Boundary Condition]{ On Analytic Solutions of the Prandtl Equations with Robin Boundary Condition in Half Space}

\author[Y-T. Ding]{Yutao Ding}
\address{Mathematical Sciences Center, Tsinghua University}
\address{Jin Chunyuan West Building, Tsinghua University, Beijing, 100084}
\email{ytding@math.tsinghua.edu.cn}
\author[N. Jiang]{Ning Jiang}
\address{Mathematical Sciences Center, Tsinghua University}
\address{Jin Chunyuan West Building, Tsinghua University, Beijing, 100084}
\email{njiang@tsinghua.edu.cn}

\date{}

\maketitle

\begin{abstract}
 The existence and uniqueness of the analytic solutions to the nonlinear Prandtl equations with Robin boundary condition on a half space are proved, based on an application of abstract Cauchy-Kowalewski theorem. These equations arise in the inviscid limit of incompressible Navier-Stokes equations with Navier-slip boundary condition in which the slip length is square root of viscosity, as formally derived in \cite{wangxin}.
\end{abstract}

\renewcommand{\theequation}{\thesection.\arabic{equation}}
\setcounter{equation}{0}

\section{ introduction}
The nonlinear Prandtl equations in the half space $\{(x,Y): x\in \mathbb{R}, Y\in \mathbb{R^+}\}$ are the following system of the tangential component of the velocity $u^P$ and the normal component of velocity $v^P$:
\begin{equation}\label{Prandtl}
\begin{split}
\partial_tu^P+u^P\partial_xu^P+v^P\partial_Yu^P-\partial_{YY} u^P+\partial_x p=&0\,,\\
\partial_xu^P+\partial_Yv^P=&0\,,\\
\partial_Y p =& 0\,.
\end{split}
\end{equation}
In this paper, we impose the Prandtl equations \eqref{Prandtl} with Robin boundary condition:
\begin{equation}\label{Robin BC}
\begin{split}
(u^P-\p_Y u^P)|_{Y=0}=&0,\\
 v^P|_{Y=0}=&0,\\
\lim_{Y\rightarrow\infty}u^P(x,Y,t)=& U(x,t)\,,
\end{split}
\end{equation}
and initial data:
\begin{equation}\label{Prandtl IC}
  u^P(x,Y,t)|_{t=0} = u_0^P(x,Y)\,.
\end{equation}
The third equation in \eqref{Prandtl} implies that the pressure $p$ does not depend on $Y$. Furthermore, $U$ satisfies the Bernoulli equation:
\begin{equation}\label{bernoulli}
\partial_t U + U\partial_x U+\partial_x p=0\,,
\end{equation}
which is the matching condition to the inner layer governed by incompressible Euler equation, i.e $U(x,0)= u^E(x,y,0)|_{y=0}$. Throughout the paper, we use the notation $Y=\frac{y}{\sqrt{\nu}}$, where $\nu$ is the viscosity of the Navier-Stokes equations.

This paper addresses the well-posedness of initial boundary problems \eqref{Prandtl}-\eqref{bernoulli} for analytic data. The main theorem is:\\

\noindent {\bf Main Theorem:} If $U(x,t)$ and $u_0^P(x,Y)$ are analytic, then the Prandtl equations with Robin boundary condition \eqref{Prandtl}-\eqref{bernoulli} have a unique analytic solution. \\

The more detail statement of the main theorem, in particular the functional spaces of $U(x,t)$, $u_0^P(x,Y)$ and the solutions will be given later. First we recall some history of the mathematical theory of Prandtl equations.

Nonlinear Prandtl equations arise in the zero viscosity limit for the incompressible Navier-Stokes equations in a domain with boundary, due to the disparity of the boundary conditions between Navier-Stokes and Euler equations. A thin region, the so-called boundary layer comes out near the boundary in which the values of the unknown functions change drastically in this zero viscosity limit. For the case of non-slip boundary condition, by rescaling the variable normal to the boundary with the scale of the square root of the viscosity, Ludwig Prandtl \cite{p} proposed the so-called Prandtl equations in 1904. Although the Prandtl equations have been simplified to a great extent, as compared with the original Navier-Stokes equations, they are still difficult from the mathematical point of view.

Under the monotone condition $\partial_y u^P>0,$  the Crocco transformation, introduced by Crocco \cite{c}, reduces the boundary layer equations to a single degenerate quasilinear parabolic equation. Based on this transformation, Oleinik and Samokhin \cite{os} proved the local in time well-posedness. Xin and Zhang \cite{xz} proved the existence of global weak solutions.
See also \cite{al,gt,wong} for other results under this monotonic assumption.

For the case without the monotone condition, the well-posedness in the usual Sobelev space, even local in time remains a widely open problem so far. If the data is analytic, Sammartino and Caflisch \cite{sc} proved the short time existence and uniqueness. Sammartino improved in \cite{s} where analyticity is required only in the $x\mbox{-}$axis direction, using the regularizing effect of the viscosity. This result was reproved and only assumed algebraic decay rate of $u^P -U$ by Kukavica and Vicol in \cite{KV} using energy-based method. More recently, G$\acute{e}$rard-Varet and Masmoudi proved the local well-posedness for the data that belongs to the Gevrey class $7/4$ in the horizontal variable $x$. On the other hand, there are also some ill-posedness results for the general Prandtl equations, for example, \cite{ca,e,gd,gd2}. Furthermore, Sammartino and Caflisch \cite{sc1} justified the convergence from the solutions of  Navier-Stokes equations to the Euler equation with Prandtl equations as the error term,  when the data are analytic. More recently, Maekawa \cite{mae} proved this convergence when the vorticity vanished near the boundary.

All above works on the Prandtl equations are about Dirichlet boundary condition, i.e. the first two conditions in \eqref{Robin BC} are replaces by $u^P|_{Y=0}= v^P|_{Y=0}=0$. In view of experimental facts, besides the non-slip boundary condition, some slip conditions can be also imposed on the Navier-Stokes equations. In \cite{navier}, Navier proposed the slip boundary condition which is now called Navier boundary condition. The boundary layer phenomenon is much weaker for this case, comparing to the slip boundary case, which makes the inviscid limit problem is more accessible mathematically. Recently, there have been many studies about the vanishing viscosity limit problem for Navier boundary condition, see \cite{1,2,clopau,lopes,plana,plana1,rousset,xin,xin1}. Note that in these works, the slip length is independent of viscosity.

In \cite{wangxin}, Wang-Wang-Xin studied the asymptotic behavior of the solutions
to the Navier-Stokes equations with Navier boundary condition in which the slip length depending on the viscosity. More specifically, they consider the incompressible Navier-Stokes equation of $\mathrm{u}^\nu = (u^\nu, v^\nu)$  in $\Omega \subset \mathbb{R}^2$, where $\Omega$ is a bounded domain or half space.
\begin{equation}\label{NS}
\begin{aligned}
   \p_t \mathrm{u}^\nu + \mathrm{u}^\nu\!\cdot\! \grad \mathrm{u}^\nu + \grad p^\nu = \nu \LAP \mathrm{u}^\nu\,, \quad &\mbox{in}\quad\! \Omega\times \mathbb{R}_+\,,\\
   \mathrm{div}\mathrm{u}^\nu =0\,, \quad &\mbox{in}\quad\! \Omega\times \mathbb{R}_+\,,
\end{aligned}
\end{equation}
with the boundary conditions
\begin{equation}\label{Navier-slip}
\mathrm{u}^\nu\!\cdot\! \mathrm{n}=0\,,\quad \left[\sigma(\mathrm{u}^\nu)\mathrm{n} + \nu^{-\gamma} \mathrm{u}^\nu\right]^{\mathrm{tan}} =0\,,\quad\mbox{on}\quad\! \pO\,,
\end{equation}
where $\nu$ is the viscosity and $\sigma(\mathrm{u}^\nu)=\frac{1}{2}(\grad \mathrm{u}^\nu + (\grad \mathrm{u}^\nu)^T)$ is the rate of the strain tensor, with $\mathrm{n}$ the normal vector on the boundary $\partial\Omega$.

In \cite{wangxin}, by using multi-scale analysis, Wang-Wang-Xin discovered that the behavior of the vanishing viscosity limit for the problem \eqref{NS} with the boundary condition \eqref{Navier-slip} is influenced by the amplitude of the slip length, in other words, by the value of $\gamma$. They formally deduce that $\gamma=\frac{1}{2}$ is critical in determining the boundary layer behavior. When $\gamma$ is super-critical, i.e. $\gamma > \frac{1}{2}$, the leading boundary layer profile satisfies the same boundary problem for the nonlinear Prandtl equations as in the non-slip case. In the critical case $\gamma =\frac{1}{2}$, the boundary layer profile also satisfies the nonlinear Prandtl equations but with a Robin boundary condition for the tangential velocity profile. When $\gamma$ is sub-critical, i.e. $\gamma < \frac{1}{2}$, the boundary layer profile appears in the order $O(\eps^{1-2\gamma})$ terms of solutions, and satisfies a boundary value problem for linearized Prandtl equations.

It remains a big challenging problem to rigorously justify the formal analysis in \cite{wangxin} in Sobolev spaces, in particular for the critical and super-critical cases, because for those cases the leading boundary layer equations are nonlinear Prandtl equations for which the well-posedness is far from well-understood. Even for the case $\gamma < \frac{1}{2}$, although the leading boundary layer obeys a linearized Prandtl equation, the vorticity is unbounded, which makes the justification of the expansion of $\mathrm{u}^\nu$ with respect to the viscosity $\nu$ nontrivial.

Our aim in this program is to consider a simpler problem: to justify the the inviscid limit proposed in \cite{wangxin} for analytic data. In other words, we seek for the analogue of Sammartino-Caflisch's result for the Navier-slip boundary condition. The first step is to prove the well-posedness of the corresponding Prandtl equation. In this paper, we study the boundary layer equation derived in \cite{wangxin} for the critical case $\gamma= \frac{1}{2}$, i.e. the Prandtl equation with Rodin boundary condition. Furthermore, we focus on the simplest case: $\mathrm{u}^\nu(x,y)= (u^\nu(x,y), v^\nu(x,y))$, $\Omega= \mathbb{R} \times \mathbb{R}^+$, and the initial data is analytic. More specifically, we consider the 2-dimensional Navier-Stokes equations with Navier boundary condition on a half-space:
\begin{equation}\nonumber
\begin{split}
 \partial_tu^{\nu}+u^\nu \partial_xu^\nu+v^\nu\partial_yu^\nu+\partial_xp^\nu&= \nu\triangle u^\nu\,, \\
 \partial_tv^{\nu}+u^\nu \partial_xv^\nu+v^\nu\partial_yv^\nu+\partial_yp^\nu &=\nu\triangle v^\nu\,, \\
 \partial_xu^\nu+\partial_yv^\nu&=0,\\
 ( u^\nu-\sqrt{\nu}\p_y u^\nu)|_{y=0}&=0,\\
  v^\nu|_{y=0}&=0,
\end{split}
\end{equation}
 where $(x,y)\in\mathbb{R}\times\mathbb{R}^+,$ $u^\nu$ is the tangential components of velocity to the
 boundary $\Gamma=\{(x,0): x\in \mathbb{R}\}$, and $v^\nu$ is the normal components.
 We may formally write $(u^\nu\,,v^\nu\,,p^\nu)(t,x,y)$  as:
\begin{eqnarray}\nonumber
u^\nu(t,x,y)&=&u^E(t,x,y)+u^B(t,x,\tfrac{y}{\sqrt{\nu}}),\\
v^\nu(t,x,y)&=&v^E(t,x,y)+\sqrt{\nu}v^B(t,x,\tfrac{y}{\sqrt{\nu}}),
\end{eqnarray}
and
\begin{equation}\nonumber
p^\nu(t,x,y)=p^E(t,x,y)+p^B(t,x,\tfrac{y}{\sqrt{\nu}}).
\end{equation}
Denote by $Y=\frac{y}{\sqrt{\nu}},$ and we define:
\begin{eqnarray}\nonumber
u^P(t,x,Y)&:=&u^E(t,x,0)+u^B(t,x,Y),\\
v^P(t,x,Y)&:=&Y\partial_y v^E(t,x,0)+v^B(t,x,Y)\,.
\end{eqnarray}

In \cite{wangxin}, it was formally derived that as $\nu$ tends to zero, $(u^E, v^E)$ satisfies the incompressible Euler equations, and  $(u^P, v^P)$ satisfies approximately the nonlinear Prandtl equation with Robin boundary condition, \eqref{Prandtl}-\eqref{bernoulli}, with $U(x,t) = u^E(x,0,t)$, as stated in the beginning of the paper.

In the present paper, we show the existence and the uniqueness of \eqref{Prandtl}-\eqref{bernoulli} in the analytic data. The main strategy is basically the same as \cite{sc}. We first introduce a new variable for which the Prandtl equations can be rewritten as a nonlinear heat equation with Robin boundary condition. Then we invert the heat operator, taking into account boundary and initial conditions. After estimating some bounds for this heat operators, we can verify that the Abstract Cauchy-Kowalewski theorem is applicable. Comparing to \cite{sc}, the main novelty of this paper is that since the boundary condition at $Y=0$ is mixed type for which the formula for the heat kernel is not easy to derive. We introduce a further new variable to transfer the Robin condition to Neumann boundary condition. As a price, a new first order $Y\mbox{-}$derivative term appears which need some new estimates. We emphasize that in our next paper in this program, i.e. justifying the inviscid limit of the Navier-Stokes equations with the Navier-slip boundary condition in which the slip length is the square root of the viscosity, this first order $Y\mbox{-}$derivative term will introduce more serious difficulty.

This paper is organized as follows: the next section will devoted to the introduction of functional spaces in which the theorem will be proved, the statement of the version of abstract Cauchy-Kowalewski theorem we will use, and the main theorem of the existence of the Prandtl equations. In Section 3 we estimate the heat operator. Applying the abstract Cauchy-Kowalewski theorem to prove the main theorem is in Section 4.

\section{The Main Result}

 \subsection{Function Spaces}
In this section, we introduce the function spaces used in the proof of the existence and uniqueness of the Prandtl equations. These function space
were defined and used in \cite{sc} and \cite{sc1}. We first define the following strip and angular sector in the complex plane.
$$D(\r)=\{ z\in \mathbb{C}: \i z \in(-\r,\r)\},$$
$$\Sigma(\th)=\{z \in\mathbb{C}:\Re z\geq0,\;|\i z|\leq \Re z \tan \th\}\,,$$
where $\Re z$ and $\Im z$ denote the real and imaginary parts of the complex number $z$ respectively. We also define for $b\in \mathbb{R}$
$$\Gamma(b)=\{z \in \mathbb{C}:\i z=b\}.$$
For a family of Banach spaces $\{X_{\r}\}_{0\leq\r\le \r_0},$ we denote $B^k_{\beta}([0,T],X_{\rho})$
as the space of all $C^k$ functions from $[0,T]$ to $X_{\r}$ with the norm
$$|f|_{k,\r,\b}=\sum_{j=0}^k\sup_{0\leq t\leq T}|\p^j_tf(t)|_{\r-\b t}.$$

Next, we introduce some functional spaces. The first is a space of the functions of $x\mbox{-}$variable.
\begin{definition}
 The space $H^{l,\r}$ is defined as the set of all complex functions $f(x)$ such that
$f$ is analytic in $D(\r)$, $\p^{\a}_x f\in L^2(\Gamma(\i x))$ for $\i x\in (-\r,\r),\;\a \leq l,$
and with the norm
$$|f|_{l,\r}=\sum_{\a \leq l}\sup_{\i x\in(-\r,\r)}\|\p^{\a}_x f(\cdot+i\i x)\|_{L^2(\Gamma(\i x))}<\infty.$$
\end{definition}

The next is a space of functions depending on $x$ and $Y$,
\begin{definition}
 The space $K^{l,\r,\th,\mu}$ with $\mu>0$ is the set of all complex functions $f(x, Y)$ such that
$f$ is analytic inside $D(\r)\times\sum(\th)$, $\p^{\a_1}_Y\p^{\a_2}_x f(x,Y)\in C^0(\sum(\th); H^{0,\r})$ for $\a_1\leq 2,\a_1+\a_2\leq l$
and with the norm
$$|f|_{l,\r,\th,\mu}=\sum_{\a_1 \leq 2}\sum_{\a_2\leq l-\a_2}\sup_{Y\in \Sigma(\th)}e^{\mu \Re Y}|\p^{\a_1}_Y\p^{\a_2}_x f(\cdot\,,Y)|_{0,\r}<\infty.$$
\end{definition}

We also need the next two function spaces depending on $t$:

\begin{definition}
 The space $K^{l,\r}_{\b,T}$  is defined as
 $$K^{l,\r}_{\b,T}=\bigcap^1_{j=0}B^j_{\b}([0,T],H^{l-j,\r})$$
with the norm
$$|f|_{l,\r,\b,T}=\sum_{j=0}^1\sum_{\a\leq l-j}\sup_{0\leq t\leq T}|\p^{j}_t\p^{\a}_x f(t\,,\cdot)|_{0,\r-\b t} < \infty\,.$$
\end{definition}

\begin{definition}
 The space $K^{l,\r,\th,\mu}_{\b,T}$  is defined as the set of functions $f(x,Y,t)$
such that
$$f\in C^0([0,T],K^{l,\r,\th,\mu}),\;\p_t\p^{\a}_x f\in C^0([0,T],K^{0,\r,\th,\mu}),\,\text{with}\,\a\leq l-2,$$
with the norm
\begin{equation*}
\begin{split}
|f|_{l,\r,\th,\mu,\b,T}=&\sum_{\a_1\leq 2}\sum_{\a_1+\a_2\leq l}\sup_{0\leq t\leq T}
|\p^{\a_1}_Y\p^{\a_2}_x f(\cdot\,,Y,t)|_{0,\r-\b t,\th-\b t,\mu-\b t}\\
&+\sum_{\a\leq l-2}\sup_{0\leq t\leq T}|\p_t\p^{\a}_x f(\cdot\,,\cdot\,,t)|_{0,\r-\b t,\th-\b t,\mu-\b t}<\infty.
\end{split}
\end{equation*}
\end{definition}

The following Sobolev inequality is useful and its proof could be found in \cite{sc}.
\begin{proposition}\label{proposition1}
\cite{sc} Let $f\in K^{l,\r,\th,0}_{\b,T}, g\in K^{l,\r,\th,\mu}_{\b,T}$ and $l\geq 3.$
Then  $f\cdot g\in K^{l,\r,\th,\mu}_{\b,T},$ and
\begin{equation}
|f\cdot g|_{l,\r,\th,\mu,\b,T}\leq C|f|_{l,\r,\th,0,\b,T}|g|_{l,\r,\th,\mu,\b,T}.
\end{equation}
\end{proposition}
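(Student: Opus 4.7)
The statement is a standard algebra-type inequality for Sammartino--Caflisch analytic Sobolev spaces, so my plan mirrors the classical argument: expand all derivatives of $fg$ by Leibniz, and on each summand estimate the factor with fewer $x$-derivatives in $L^\infty_x$ via a one-dimensional Sobolev embedding while keeping the other in $H^{0,\r-\b t}$. Crucially, $f$ carries no exponential weight ($\mu=0$) whereas $g$ carries $e^{\mu \Re Y}$, so the weight passes through every bound cleanly once all estimates are made pointwise in $Y$.

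Fixing $t$ and $Y$, for $\a_1 \le 2$ and $\a_1+\a_2 \le l$ I would write
\begin{equation*}
\p^{\a_1}_Y \p^{\a_2}_x (fg) = \sum_{\b_1\le\a_1,\,\b_2\le\a_2} \binom{\a_1}{\b_1}\binom{\a_2}{\b_2}\, \p^{\b_1}_Y\p^{\b_2}_x f\,\cdot\,\p^{\a_1-\b_1}_Y\p^{\a_2-\b_2}_x g.
\end{equation*}
In each term, since $\b_2 + (\a_2-\b_2) = \a_2 \le l$ and $l \ge 3$, at least one of $\b_2$, $\a_2-\b_2$ is $\le l-1$; I would place that factor in $L^\infty_x$ using the one-dimensional embedding $H^1(\Gamma(\i x)) \hookrightarrow L^\infty(\Gamma(\i x))$ on each horizontal line of the strip $D(\r-\b t)$, giving
\begin{equation*}
\|\p^\gamma_x h(\,\cdot\,,Y,t)\|_{L^\infty(\Gamma(\i x))} \le C\, |h(\,\cdot\,,Y,t)|_{\gamma+1,\,\r-\b t}
\end{equation*}
for $\gamma \le l-1$ and either factor $h$. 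The other factor stays in $L^2$, and H\"older on $\Gamma(\i x)$ delivers the desired product estimate for $|\p^{\a_1}_Y\p^{\a_2}_x(fg)(\,\cdot\,,Y,t)|_{0,\r-\b t}$.

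Multiplying by $e^{(\mu-\b t)\Re Y}$ and taking the sup over $Y\in\Sigma(\th-\b t)$ attaches the weight entirely to the $g$-factor (yielding $|g|_{l,\r,\th,\mu,\b,T}$), while the $f$-factor is absorbed into $|f|_{l,\r,\th,0,\b,T}$ uniformly in $Y$. The time-derivative part of the norm, where only $\a \le l-2$ horizontal derivatives are available, is handled identically via $\p_t(fg)=(\p_t f)g+f\p_t g$; the hypothesis $l\ge 3$, i.e.\ $l-2\ge 1$, provides precisely the slack needed for the same Sobolev embedding to apply to both Leibniz terms.

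The only real obstacle is notational: one must simultaneously track the three shrinking parameters $\r-\b t$, $\th-\b t$, $\mu-\b t$. Since all estimates are pointwise in $Y$ and the $x$-supremum is taken line-by-line in the strip, no loss is incurred in $\r$, $\th$, or $\mu$, and the constant $C$ depends only on $l$ and the combinatorial factors from the Leibniz expansions. Summing over the finitely many pairs $(\a_1,\a_2)$ permitted by the definition of $|\cdot|_{l,\r,\th,\mu,\b,T}$ then yields the proposition.
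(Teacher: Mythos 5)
Your argument is correct and is essentially the standard Leibniz-plus-Sobolev-embedding proof of this algebra property; the paper itself gives no proof, simply quoting the statement from Sammartino--Caflisch \cite{sc}, where the argument is the same as yours. The only point to make explicit is the selection rule in each Leibniz term: the factor sent to $L^\infty_x$ must satisfy both (derivative count)$+1\leq l$ and at most two $Y$-derivatives, which your ``fewer $x$-derivatives'' convention does guarantee for every admissible $(\a_1,\a_2)$ when $l\geq 3$.
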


The following two lemmas is the Cauchy estimate of $H^{l,\r}$ and $K^{l,\r,\th,\mu}$
which have been used in \cite{sc}.
\begin{lemma}\label{lemma1}
\cite{sc}
Let $f\in H^{l,\r''}.$ If $\r'<\r''$ then
\begin{equation}
|\p_x f|_{l,\r''}\leq C\frac{|f|_{l,\r}}{\r''-\r'}.
\end{equation}
\end{lemma}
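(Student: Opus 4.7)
The statement is a classical Cauchy-type estimate: analytic functions in a strip lose a small bit of strip width when differentiated, with a loss proportional to $1/(\rho''-\rho')$. The plan is to represent $\partial_x^{\alpha+1} f$ via Cauchy's integral formula on a circle fitting inside the wider strip $D(\rho'')$, and then bring the $L^2$ norm on the horizontal line $\Gamma(b)$ inside the contour integral via Minkowski's integral inequality.

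Concretely, for each $\alpha\le l$ and each $b\in(-\rho',\rho')$, I would set $r:=\rho''-\rho'>0$. Since $|b+r\sin\theta|\le |b|+r<\rho'+(\rho''-\rho')=\rho''$, the circle of radius $r$ around any point $z_{0}=x+ib\in \Gamma(b)$ lies entirely inside the strip $D(\rho'')$, so $\partial_x^{\alpha}f$ is holomorphic there. Cauchy's formula applied to $\partial_x^\alpha f$ then yields
\begin{equation*}
\partial_x^{\alpha+1} f(x+ib) \;=\; \frac{1}{2\pi i}\oint_{|w-z_0|=r}\frac{\partial_x^{\alpha}f(w)}{(w-z_0)^2}\,dw \;=\; \frac{1}{2\pi r}\int_0^{2\pi}\partial_x^{\alpha}f\bigl(x+r\cos\theta+i(b+r\sin\theta)\bigr)\,e^{-i\theta}\,d\theta.
\end{equation*}

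Next I would take the $L^2(\Gamma(b))$ norm in the real variable $x$ and apply Minkowski's integral inequality to pull the norm inside the $d\theta$ integral. The translation-invariance of Lebesgue measure on the real line gives
\begin{equation*}
\bigl\|\partial_x^{\alpha}f(\,\cdot\,+r\cos\theta+i(b+r\sin\theta))\bigr\|_{L^2(\Gamma(b))}=\bigl\|\partial_x^{\alpha}f(\,\cdot\,+i(b+r\sin\theta))\bigr\|_{L^2(\Gamma(b+r\sin\theta))},
\end{equation*}
and each of these is bounded by $\sup_{|c|<\rho''}\|\partial_x^{\alpha}f(\,\cdot\,+ic)\|_{L^2}\le |f|_{l,\rho''}$ since $|b+r\sin\theta|<\rho''$. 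This produces
\begin{equation*}
\bigl\|\partial_x^{\alpha+1}f(\,\cdot\,+ib)\bigr\|_{L^2(\Gamma(b))}\le \frac{1}{r}\,|f|_{l,\rho''}=\frac{|f|_{l,\rho''}}{\rho''-\rho'},
\end{equation*}
and summing over $\alpha\le l$ and taking the supremum over $|b|<\rho'$ gives the asserted bound with $C=l+1$.

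No genuine obstacle is expected; the only technical point is the legitimacy of Minkowski's integral inequality together with the verification that, for the chosen radius $r=\rho''-\rho'$, every circle stays strictly inside the strip $D(\rho'')$ on which the $H^{l,\rho''}$-norm controls all the shifted $L^2$ slices. Both are routine, so the entire argument is essentially a one-shot application of the Cauchy integral formula combined with an $L^2$ Minkowski step.
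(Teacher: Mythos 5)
Your argument is correct and is exactly the standard Cauchy estimate (Cauchy's integral formula on circles of radius $\rho''-\rho'$ centered on $\Gamma(b)$, followed by Minkowski's inequality and translation invariance of the $L^2(\Gamma)$ norms), which is the same route as the proof in Sammartino--Caflisch \cite{sc} that the paper simply cites without reproducing. You also implicitly corrected the misprinted indices, reading the claim as $|\partial_x f|_{l,\rho'}\leq C\,|f|_{l,\rho''}/(\rho''-\rho')$, which is the intended statement.
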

\begin{lemma}\label{lemma2}
\cite{sc}
Let $f\in K^{\l,\r',\th'',\mu''}.$ Then
\begin{equation}
|\chi(Y)\p_Yf|_{l,\r',\th',\mu'}\leq C(\frac{|f|_{l,\r',\th'',\mu'}}{\th''-\th'}
+\mu'|f|_{l,\r',\th',\mu'}),
\end{equation}
\begin{equation}
|Y\p_Yf|_{l,\r',\th',\mu'}\leq C(\frac{|f|_{l,\r',\th'',\mu'}}{\th''-\th'}
+\mu'\frac{|f|_{l,\r',\th',\mu''}}{\mu''-\mu'}+|f|_{l,\r',\th',\mu'}).
\end{equation}
where $\chi(Y)$ is analytic in $\Sigma(\th'), \chi(0)=0,$ and $\p_Y^j\chi(Y)$ is
bounded in  $\Sigma(\th')$ for $j\leq 2.$
\end{lemma}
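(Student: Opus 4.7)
The plan is to prove both estimates via the angular Cauchy integral formula, combined with an exponential rescaling that absorbs the weight $e^{\mu'\Re Y}$ before any differentiation is performed. The central identity I would exploit is
\[
\partial_Y f \;=\; -\mu'\,f \;+\; e^{-\mu' Y}\,\partial_Y\!\bigl(e^{\mu' Y} f\bigr),
\]
so that setting $g(Y):=e^{\mu' Y}f(Y)$ produces a function which, by the very definition of $|\cdot|_{l,\rho',\theta'',\mu'}$, is uniformly bounded on $\Sigma(\theta'')$ without any weight. For any $Y\in\Sigma(\theta')$ with $Y\ne 0$, a disk of radius $r=c(\theta''-\theta')|Y|$ centered at $Y$ sits inside $\Sigma(\theta'')$ for $c$ small enough, so Cauchy's formula applied to $g$ yields
\[
|\partial_Y g(Y)| \;\leq\; \frac{1}{r}\sup_{|\zeta-Y|=r}|g(\zeta)| \;\leq\; \frac{C}{(\theta''-\theta')|Y|}\,|f|_{l,\rho',\theta'',\mu'}.
\]

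For the first inequality, I would split $\chi(Y)\partial_Y f=-\mu'\chi(Y)f+\chi(Y)e^{-\mu' Y}\partial_Y g$. Since $\chi$ is bounded by hypothesis, the first piece contributes exactly $C\mu'|f|_{l,\rho',\theta',\mu'}$, the second term of the claimed bound. In the second piece, $|e^{-\mu' Y}|=e^{-\mu'\Re Y}$ cancels the weight $e^{\mu'\Re Y}$ appearing in the target norm, while the Taylor-type bound $|\chi(Y)|\leq C|Y|$ (from $\chi(0)=0$ and $\partial_Y\chi$ bounded) absorbs the factor $1/|Y|$ coming from the Cauchy estimate, producing the first term $C|f|_{l,\rho',\theta'',\mu'}/(\theta''-\theta')$. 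The $x$-derivatives and higher $Y$-derivatives (up to order two) enter via Leibniz's rule, using that $\partial_Y^j\chi$ is bounded for $j\leq 2$ and invoking Lemma 2.1 for the $x$-direction; the $H^{0,\rho'}$ (essentially $L^2$) structure is preserved under the contour integration by Fubini.

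For the second inequality, I would use the same decomposition $Y\partial_Y f=-\mu' Yf+Ye^{-\mu' Y}\partial_Y g$. The second summand is handled exactly as above, with $|Y|$ now playing the role previously played by the bound $|\chi(Y)|\leq C|Y|$: the factor $|Y|$ cancels the $1/|Y|$ from the Cauchy radius, producing the first term $C|f|_{l,\rho',\theta'',\mu'}/(\theta''-\theta')$. The genuinely new issue is the term $-\mu' Yf$, where $|Y|$ is unbounded and no $\chi$-type cancellation is available. To tame it I would borrow extra exponential decay from the stronger weight $\mu''>\mu'$: inserting $|f(Y)|\leq e^{-\mu''\Re Y}|f|_{l,\rho',\theta',\mu''}$ and using the sector bound $\Re Y\geq |Y|\cos\theta'$ on $\Sigma(\theta')$, the elementary inequality
\[
\sup_{t\ge 0} t\,e^{-(\mu''-\mu')\cos\theta'\,t} \;\leq\; \frac{C}{\mu''-\mu'}
\]
converts $\mu'|Y|\,e^{-(\mu''-\mu')\Re Y}$ into $C\mu'/(\mu''-\mu')$, yielding the middle term. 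The residual term $|f|_{l,\rho',\theta',\mu'}$ absorbs constant-coefficient contributions coming from the Leibniz expansion of $\partial_Y^j(Y\partial_Y f)$ for $j=1,2$: namely the pieces $\partial_Y f$ and $2\partial_Y^2 f$ arising when a $Y$-derivative hits the explicit factor $Y$, which are directly controlled by the target norm of $f$ itself.

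I expect the main obstacle to be precisely the interplay between the exponential weight $e^{\mu'\Re Y}$ and the Cauchy integral on a circle of radius proportional to $|Y|$: a naive Cauchy bound would introduce a spurious factor $e^{\mu' r}\sim e^{c\mu'(\theta''-\theta')|Y|}$ which blows up for large $|Y|$. The rescaling $g=e^{\mu' Y}f$ is exactly the device that moves the weight outside of the contour integral, and the reason a second, smaller parameter $\mu''$ must enter the $Y\partial_Y f$ estimate is that no amount of algebraic cancellation can absorb the unbounded factor $|Y|$ without an auxiliary exponential decay borrowed from a slightly stronger weight.
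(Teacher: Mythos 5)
The paper does not actually prove this lemma: it is quoted verbatim from Sammartino--Caflisch \cite{sc}, so there is no in-paper argument to compare against. Your proposal is correct and is essentially the standard proof of that cited result: the conjugation $g=e^{\mu' Y}f$ together with the Cauchy estimate on disks of radius $\sim(\theta''-\theta')|Y|$ contained in $\Sigma(\theta'')$ is exactly the mechanism that avoids the spurious factor $e^{c\mu'(\theta''-\theta')|Y|}$ and produces the $\mu'|f|_{l,\rho',\theta',\mu'}$ term, while the extra decay borrowed from $\mu''>\mu'$ (via $\Re Y\geq|Y|\cos\theta'$ and $\sup_{t\ge0}te^{-at}\le C/a$) yields the middle term in the $Y\partial_Yf$ bound. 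Two small bookkeeping remarks: the appeal to Lemma \ref{lemma1} is unnecessary, since no loss in $\rho$ occurs and $\partial_x^{\alpha_2}$ simply commutes with multiplication by $\chi(Y)$ or $Y$ and with $\partial_Y$; and in the first estimate the Leibniz terms $\partial_Y^j\chi\,\partial_Y^{k}f$ ($j\ge1$, $k\le2$) produce a bare $C|f|_{l,\rho',\theta',\mu'}$ contribution that does not appear on the right-hand side as stated, so it must be absorbed into the term $C|f|_{l,\rho',\theta'',\mu'}/(\theta''-\theta')$ using the monotonicity of the norms in $\theta$ and the fact that $\theta''-\theta'$ is bounded by a universal constant (just as $\cos\theta'$ is bounded below, e.g.\ $\theta'\le\pi/6$ as used elsewhere in the paper). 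With these trivial adjustments the argument is complete.
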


\subsection{The Abstract Cauchy-Kowalewski Theorem}
Next, we state the abstract Cauchy-Kowalewski theorem (briefly denoted as  ACK ) of the following version, for the proof see \cite{s}.
\begin{theorem}
Let $\{X_{\r}: 0<\r\leq\r_0\}$ be a family of Banach spaces with norm $|\cdot|_\rho$, such that
$X_{\r'}\subset X_{\r''}$ and $|\cdot|_{\r''}\leq |\cdot|_{\r'}$
for $0<\r''\leq \r'\leq \r_0.$ Consider the equation
\begin{equation}\label{ack}
u+F(t,u)=0,\;\;\;t\in[0,T].
\end{equation}
Suppose that there exists $R>0$, $\r_0>0$ and $\beta_0>0$ such that if $0 < t \leq \frac{\rho_0}{\beta_0}$, the following properties
hold:
\begin{itemize}
\item For any $0<\r'<\r\leq\r_0-\b_0T,$ and $u$ satisfying $\{u(t)\in X_{\r}, \text{and}\;|u(t)|_{\r}\leq R,\; 0\leq
t\leq T\}$, the map $F(t,u):[0,T]\mapsto X_{\r'}$ is continuous.

\item For any  $0<\r<\r_0-\b_0T$ the function $F(t,0):[0,\frac{\r_0}{\b_0}]\mapsto\{u\in X_{\r}:\sup_{0\leq t\leq T}|u(t)|_{\r}\leq R\}$
is continuous and
\begin{equation}\label{ack1}
|F(t,0)|_{\r_0-\b_0t}\leq R_0<R.
\end{equation}

\item For any  $0<\r'<\r(s)<\r_0-\b_0s$ and for any  $ u^1,u^2\in\{u\in X_{\r}:\sup_{0\leq t\leq T}|u(t)|_{\r-\b_0t}\leq R\},$
\begin{equation}\label{ack2}
|F(t,u^1)-F(t,u^2)|_{\r'}\leq C\int^t_0 ds(\frac{|u^1-u^2|_{\r(s)}}{\r(s)-\r'}+\frac{|u^1-u^2|_{\r'}}{\sqrt{t-s}})
\end{equation}
\end{itemize}

Then there exists $\b>\b_0,\,\bar{\r}<\r_0,$ such that for any $ 0<\r<\bar{\r}-\b t,$
the equation \eqref{ack} has a unique solution $u(t)\in X_{\r}$, with $t\in [0,\frac{\bar{\r}}{\b}]$, satisfying
\begin{equation}\label{ack3}
\sup_{t\in [0,\frac{\bar{\r}}{\b}]}|u(t)|_{\bar{\r}-\b t}\leq C.
\end{equation}
\end{theorem}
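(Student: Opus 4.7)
The proof I would write follows the classical Picard iteration in a scale of Banach spaces, in the spirit of Nirenberg--Nishida--Safonov, adapted to absorb the parabolic smoothing factor $1/\sqrt{t-s}$ that appears in (\ref{ack2}). Set $u^0\equiv 0$ and define the iterates $u^{n+1}(t)=-F(t,u^n)$. The goal is to find $\beta>\beta_0$ and $\bar\rho<\rho_0$ (both specified at the end of the argument) so that $\{u^n\}$ is Cauchy uniformly on $[0,\bar\rho/\beta]$ in every space $X_{\bar\rho-\beta t}$.

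The first step is uniform boundedness of the iterates in the ball of radius $R$. The base case is the assumption (\ref{ack1}). For the inductive step, write $u^{n+1}=F(t,0)-[F(t,u^n)-F(t,0)]$, apply (\ref{ack2}) with $u^1=u^n$, $u^2=0$, and absorb the resulting term into the gap $R-R_0$; this forces a lower bound on $\beta$, since the loss-of-radius denominator must be compensated by the time decay introduced by $\rho$ shrinking at rate $\beta$.

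The main estimate is the convergence of the differences $v^n=u^{n+1}-u^n$. Subtracting consecutive iterates and applying (\ref{ack2}) gives
$$|v^n(t)|_{\rho'}\;\le\;C\int_0^t\left(\frac{|v^{n-1}(s)|_{\rho(s)}}{\rho(s)-\rho'}+\frac{|v^{n-1}(s)|_{\rho'}}{\sqrt{t-s}}\right)ds.$$
I would then posit an inductive ansatz of the form $|v^n(t)|_{\bar\rho-\beta t-\eta}\le M\,(At/\eta)^n$, valid for small $\eta>0$, and verify it by choosing the intermediate radius $\rho(s)$ as a linear interpolation between $\rho'$ and $\bar\rho-\beta s$ (the standard choice $\rho(s)=\rho'+\tfrac12(\bar\rho-\beta s-\rho')$ works). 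The first integrand is then handled by a Beta--type computation that exchanges a power of $\eta$ for a factor of $t/\eta$, while the second contributes at most $2\sqrt{t}\,M(At/\eta)^{n-1}$, which is geometric in $n$. Choosing $\beta$ large relative to $CA$ forces contraction and yields summability of $\sum|v^n|$ in the appropriate norm.

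The main obstacle is making the two terms in (\ref{ack2}) compatible inside a single inductive bound: the first requires a loss-of-radius scheme in which $\rho(s)-\rho'$ shrinks as $n$ grows, while the second needs the $1/\sqrt{t-s}$ singularity to be integrated against an inductively growing power of $t$. The balance is delicate but classical, and dictates the precise relation between $\beta$, $\bar\rho$, and the time horizon $\bar\rho/\beta$. Once the sequence converges to $u(t)\in X_{\bar\rho-\beta t}$, passing to the limit in $u^{n+1}=-F(t,u^n)$ uses the continuity of $F$ stated in the first hypothesis and yields (\ref{ack}); uniqueness follows from applying (\ref{ack2}) to the difference of two solutions and invoking a Gronwall-type inequality that also absorbs the $1/\sqrt{t-s}$ kernel.
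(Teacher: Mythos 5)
The paper itself does not prove this theorem; it is quoted with a pointer to \cite{s}, so your proposal can only be compared with the standard proof given there, which is indeed the Picard-type iteration in a scale of Banach spaces that you describe. Your overall architecture (iterates $u^{n+1}=-F(t,u^n)$, uniform $R$-bound using the gap $R-R_0$, contraction in a cone $\rho<\bar\rho-\beta t$ with $\beta$ large, limit passage via the continuity hypothesis, Gronwall-type uniqueness) is the right one and matches the cited proof in spirit.

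However, the central inductive estimate as you set it up does not close, and this is a genuine gap. First, the ansatz $|v^n(t)|_{\bar\rho-\beta t-\eta}\le M(At/\eta)^n$ is incompatible with the singular term: inserting it into $\int_0^t |v^{n-1}(s)|_{\rho'}(t-s)^{-1/2}\,ds$ gives a bound of order $\sqrt{t}\,(At/\eta)^{n-1}$, and $\sqrt{t}$ is not dominated by a multiple of $At/\eta$ as $t\to 0$ with $\eta$ fixed; already $v^1$ behaves like $\sqrt{t}$ rather than $t/\eta$ for small $t$, so the ansatz fails at the first step. The per-step gain produced by the kernel $1/\sqrt{t-s}$ is a factor $\sqrt{t}$, not $t/\eta$, and the inductive weight must reflect this (e.g.\ a mixed bound such as $M\bigl(At/\eta+B\sqrt{t}\bigr)^n$, with $B\sqrt{t}$ made small because $t\le\bar\rho/\beta$ and $\beta$ is large, or, as in the cited proof and in Caflisch's simplified ACK argument, a fixed singular weight in $\rho$ together with geometric coefficients $\varepsilon_n$). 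Second, the midpoint choice $\rho(s)=\rho'+\tfrac12(\bar\rho-\beta s-\rho')$ in the Cauchy term yields either a logarithmically divergent integral (if one uses $\rho(s)-\rho'\ge\beta(t-s)/2$) or an extra factor $2^n$ (if one uses $\rho(s)-\rho'\ge\eta/2$), which destroys geometric decay in $n$; the standard cures are an $n$-dependent splitting of the radius loss or the key lemma that $\int_0^t \frac{ds}{(\rho(s)-\rho')\,(\rho_0-\beta s-\rho(s))^{a}}\le \frac{C_a}{\beta}\,(\rho_0-\beta t-\rho')^{-a}$, so that smallness comes from $1/\beta$ with $\beta$ large rather than from powers of $t/\eta$. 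Relatedly, your uniform bound $|u^n|\le R$ cannot be propagated uniformly up to the boundary of the cone, since your estimate degenerates as $\rho'\uparrow\bar\rho-\beta t$; it must be carried with a weight or on a nested family of cones. With these corrections the scheme you outline does go through, but as written the induction, which is the heart of the proof, does not.
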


This theorem is also applicable if the parameter $\r$ is replaced by a vector
of parameters $(\r,\th,\mu)$, and the right side of \eqref{ack2}
should be changed into
\begin{equation}
\begin{split}
&\int^t_0 ds(\frac{|u^1-u^2|_{\r(s),\th',\mu'}}{\r(s)-\r'}+\frac{|u^1-u^2|_{\r',\th(s),\mu'}}{\th(s)-\th'}\\
&\;\;\;\;\;\;\;\;\;\;+\frac{|u^1-u^2|_{\r',\th',\mu(s)}}{\mu(s)-\mu'}+\frac{|u^1-u^2|_{\r',\th',\mu'}}{\sqrt{t-s}}),
\end{split}
\end{equation}
and this case will be used in the proof of our main theorem.

\subsection{The Statement of the Main Result}

Now we are ready to state the main theorem of this paper.
\begin{theorem}\label{th1}
Let $U(x,t)\in K^{l,\r}_{\b,T}$ for $l\geq 3$, and $u_0^P$ with $u_0^P-U(x,0)\in K^{l,\r,\th,\mu}$ satisfies the compatibility conditions
\begin{equation}\nonumber
(\p_Yu_0^P-u_0^P)\mid_{Y=0}=0\,,\quad\mbox{and}\quad\! \lim_{Y\rightarrow \infty}u_0^P(x,Y,t) = U(x,t)\,.
\end{equation}
Then there exists a unique solution $u^P$ of the Prandtl equations \eqref{Prandtl}-\eqref{bernoulli}.
This solution can be written as
\begin{equation}
u^P(x,Y,t)=\tilde{u}^P+U(x,t),
\end{equation}
where $\tilde{u}^P\in K^{l,\r_1,\th_1,\mu_1}_{\b_1,T}$ for $0<\r_1<\r$, $0<\th_1<\th$, $0<\mu_1<\mu$ and $\b_1>\b.$ Furthermore, this solution satisfies the estimate in $K^{l,\r_1,\th_1,\mu_1}_{\b_1,T}$:
\begin{equation}
|\tilde{u}^P|_{l,\r_1,\th_1,\mu_1,\b_1,T}\leq C(|u_0^P-U(x,0)|_{l,\r,\th,\mu}+|U(x,t)|_{l,\r,\b,T}).
\end{equation}
\end{theorem}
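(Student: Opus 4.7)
The plan is to follow the Sammartino--Caflisch strategy adapted to the Robin boundary condition: reformulate the Prandtl system as a fixed-point problem for a Duhamel integral equation and verify the hypotheses of the abstract Cauchy--Kowalewski theorem on the scale $\{K^{l,\rho,\theta,\mu}_{\beta,T}\}$. First I would set $u^P=\tilde u^P+U(x,t)$, use the Bernoulli relation \eqref{bernoulli} to eliminate $\partial_xp$, and use incompressibility to write $v^P=-\int_0^Y\partial_x(\tilde u^P+U)\,dY'$. The result is a nonlinear heat equation
\begin{equation*}
\partial_t\tilde u^P-\partial_{YY}\tilde u^P=-(\tilde u^P+U)\partial_x\tilde u^P-v^P\partial_Y\tilde u^P-\tilde u^P\partial_xU,
\end{equation*}
with the inhomogeneous Robin condition $(\tilde u^P-\partial_Y\tilde u^P)|_{Y=0}=-U(x,t)$ and $\tilde u^P\to 0$ as $Y\to\infty$. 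Because the Robin heat kernel on the half-line is unwieldy, the next step is a second substitution $\tilde u^P=\phi(Y)w$ with $\phi$ analytic, bounded together with $1/\phi$, $\phi'$, $\phi''$ on every sector $\Sigma(\theta)$, and $\phi(0)=\phi'(0)=1$ (a concrete choice is $\phi(Y)=1+Y$). A short computation yields $(\tilde u^P-\partial_Y\tilde u^P)|_{Y=0}=-\phi(0)\partial_Yw|_{Y=0}=-\partial_Yw|_{Y=0}$, so $w$ satisfies the inhomogeneous Neumann condition $\partial_Yw|_{Y=0}=U$; the price is a new first-order term $-\tfrac{2\phi'(Y)}{\phi(Y)}\partial_Yw$ and a zero-order term $-\tfrac{\phi''(Y)}{\phi(Y)}w$ in the equation, together with $\phi$-dressed versions of the nonlinear terms.

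Splitting $w=w_\flat+w_\sharp$, where $w_\flat$ is an explicit lifting of the Neumann data $U$ and the initial data $u_0^P-U(x,0)$ by the half-line Neumann heat kernel and $w_\sharp$ has homogeneous Neumann data and zero initial value, the Prandtl problem becomes an integral equation $w_\sharp+F(t,w_\sharp)=0$, with
\begin{equation*}
F(t,w_\sharp)=\int_0^t E_N(t-s)\bigl[N(w_\flat+w_\sharp,U)\bigr]\,ds,
\end{equation*}
where $E_N$ denotes the Neumann heat semigroup on the half-space and $N$ collects all nonlinear terms together with the new first- and zero-order terms. To verify the ACK hypotheses in the vector-parameter form $(\rho,\theta,\mu)$, I would use Proposition~\ref{proposition1} to control the quadratic products $(\phi w+U)\partial_x(\phi w)$ and $v^P\partial_Y(\phi w)$; Lemma~\ref{lemma1} to trade one $\partial_x$-derivative for the $1/(\rho(s)-\rho')$ loss demanded by \eqref{ack2}; and Lemma~\ref{lemma2} to handle the $Y\partial_Y$ combination arising from the $-Y\partial_xU$ piece of $v^P$ as well as any $\chi(Y)\partial_Yw$-type remainder. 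The $1/\sqrt{t-s}$ factor in \eqref{ack2} comes from the heat-operator estimates to be proved in Section~3. Once ACK yields a unique fixed point $w_\sharp$ on some interval $[0,\bar\rho/\beta]$, setting $\tilde u^P=\phi(Y)(w_\flat+w_\sharp)$ recovers the Prandtl solution, and the stated estimate on $|\tilde u^P|_{l,\rho_1,\theta_1,\mu_1,\beta_1,T}$ follows from \eqref{ack3} together with the linear control of $w_\flat$ by $|U|_{l,\rho,\beta,T}$ and $|u_0^P-U(x,0)|_{l,\rho,\theta,\mu}$.

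The main obstacle is the new first-order term $-\tfrac{2\phi'}{\phi}\partial_Yw$ produced by the Robin-to-Neumann substitution: its coefficient does not vanish at $Y=0$, so Lemma~\ref{lemma2} does not apply directly, and the bare $\partial_Y$ must be absorbed either by the $\mu'$-factor furnished by the Neumann heat kernel or by an extra $1/\sqrt{t-s}$ factor coming from differentiating the kernel. Establishing in Section~3 a Neumann analog of the Sammartino--Caflisch bound for $\partial_YE_N(t-s)$, strong enough to keep this new contribution inside the Lipschitz estimate \eqref{ack2} without breaking the square-integrable singularity in time, is the crux of the argument.
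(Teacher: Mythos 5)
Your proposal is correct and follows essentially the same route as the paper: subtract $U$, convert the Robin condition to an inhomogeneous Neumann condition by a change of unknown with $\phi(0)=\phi'(0)=1$ (the paper takes $\phi(Y)=e^{Y}$, i.e.\ $u=e^{-Y}\tilde u^P$, rather than $1+Y$), lift the Neumann and initial data by the half-line Neumann heat kernels, and close a Duhamel fixed point via the abstract Cauchy--Kowalewski theorem in the $(\rho,\theta,\mu)$ scale. You also correctly identify the crux---the new first-order $\partial_Y$ term, handled exactly as in the paper by shifting the derivative onto the kernel to produce the $1/\sqrt{t-s}$ factor in \eqref{ack2}---so the differences (choice of $\phi$, applying ACK to the remainder rather than the full unknown) are inessential.
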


Following the same procedure used in \cite{sc}, we shall recast the Prandtl equations in a form suitable for the application of the ACK theorem. First, to get rid of the pressure, we introduce the new variable $\tilde{u}^P$:
\begin{equation}\label{ue}
\begin{split}
\tilde{u}^P=u^P-U(x,t),
\end{split}
\end{equation}
we have that
\begin{equation}
v^P=-\int^Y_0\p_xu^PdY'=-\int^Y_0\p_x\tilde{u}^PdY'-Y\p_xU.
\end{equation}

Then from \eqref{Prandtl}, \eqref{Robin BC} and \eqref{bernoulli}, $\tilde{u}^P$ satisfies:
\begin{equation}\label{p1e}
\begin{split}
(\p_t-\p_{YY})\tilde{u}^P=&-(\tilde{u}^P\p_x\tilde{u}^P+\tilde{u}^P\p_xU+U\p_x\tilde{u}^P)
\\&+(\int^Y_0\p_x\tilde{u}^PdY'+Y\p_xU)\p_Y\tilde{u}^P,\\
\lim_{Y\rightarrow+\infty}\tilde{u}^P=&0,\\
\tilde{u}^P-\p_Y\tilde{u}^P=&-U(x,t)\,,\quad \text{on}\quad\! Y=0\,,\\
\tilde{u}^P\mid_{t=0}=&\tilde{u}^P_0,
\end{split}
\end{equation}
where $\tilde{u}^P_0=u^P_0-U(x,0).$

The main difference of this paper with \cite{sc} is that the boundary condition in \eqref{p1e} is mixed type, which causes new difficulty, in particular that heat kernel with Robin boundary condition is not easy to derive explicitly. To overcome this difficulty,  we introduce the following variable to change the Robin boundary condition into Neumann boundary condition. Let
\begin{equation}
u=e^{-Y}\tilde{u}^P,
\end{equation}
then $u$ solves the following equations:
\begin{equation}\label{p2e}
\begin{split}
(\p_t-\p_{YY})u=&-(e^yu\p_xu+u\p_xU+U\p_xu)+u
\\&+(\int^Y_0e^Y\p_xudY'+Y\p_xU)(\p_Yu+u)+2\p_Yu\\
\doteq & K(u,t)+2\p_Yu,\\
\lim_{Y\rightarrow+\infty}u=&0,\\
\p_Yu=&U(x,t)\,,\quad \text{on}\quad\! Y=0\,,\\
u\mid_{t=0}=&u_0,
\end{split}
\end{equation}
where $u_0=e^{-Y}\tilde{u}^P_0.$  The equation \eqref{p2e} is a heat equation with nonlinear source term, and the boundary condition is inhomogeneous Neumann at $Y=0$., which allows us to use the explicit formula of the heat kernel. However, \eqref{p2e} includes a first order $Y\mbox{-}$ derivative term $2\partial_Y u$, which gives a new difficulty in the estimates. Handling this term is the main difference of this paper with \cite{sc}.

Theorem \ref{th1} is a corollary of the following proposition.
\begin{proposition} \label{pro21}
Assume that $u_0\in K^{l,\r,\th,\mu+1}, U(x,t)\in K^{l,\r}_{\b,T}, l\geq 3,
\p_Yu_0\mid_{Y=0}=U,$
and let $0<\r_1<\r, 0<\th_1<\th, 0<\mu_1<\mu, \b_1>\b.$ Then there exists a unique
solution $u\in K^{l,\r_1,\th_1+1,\mu_1}_{\b_1,T}$ of equation system (\ref{p2e}), and $u$
satisfies:
\begin{equation}
|u|_{l,\r_1,\th_1,\mu_1+1,\b_1,T}\leq C(|u_0|_{l,\r,\th,\mu+1}+|U(x,t)|_{l,\r,\b,T}).
\end{equation}
\end{proposition}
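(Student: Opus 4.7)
\textbf{Proof plan for Proposition \ref{pro21}.} The plan is to recast the initial--boundary value problem \eqref{p2e} as a fixed-point equation of the form $u + F(t,u) = 0$ and then apply the Abstract Cauchy--Kowalewski theorem stated above. Since the boundary condition at $Y=0$ is now the inhomogeneous Neumann condition $\p_Y u = U(x,t)$, the heat kernel on the half-line is explicit:
\begin{equation*}
G(Y,Y',t) = \frac{1}{\sqrt{4\pi t}}\bigl(e^{-(Y-Y')^2/4t} + e^{-(Y+Y')^2/4t}\bigr),
\end{equation*}
and Duhamel's principle lets me write the solution to \eqref{p2e} formally as
\begin{equation*}
u(x,Y,t) = E_0[u_0] + E_b[U] + E\bigl[K(u,\cdot) + 2\p_Y u\bigr](x,Y,t),
\end{equation*}
where $E_0$ propagates the initial datum, $E_b$ is the boundary-data operator built from $G(Y,0,t-s)$, and $E$ is the source Duhamel operator. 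Setting $F(t,u) = -E_0[u_0] - E_b[U] - E[K(u,\cdot) + 2\p_Y u]$ puts the problem into ACK form on the scale $X_{(\rho,\theta,\mu)} = K^{l,\rho,\theta,\mu}$.

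Next I would verify the three hypotheses of ACK in the parameter vector $(\rho,\theta,\mu)$. The continuity of $F(t,u)$ in $t$ is routine once the heat operator estimates of Section 3 are in place. The bound $|F(t,0)|_{\rho_0-\beta_0 t,\theta_0-\beta_0 t,\mu_0-\beta_0 t} \le R_0 < R$ follows from the estimates on $E_0$ and $E_b$ applied to $u_0 \in K^{l,\rho,\theta,\mu+1}$ and $U \in K^{l,\rho}_{\beta,T}$, together with smallness absorbed by choosing $T$ small and $\beta$ large. For the Lipschitz estimate, Proposition 2.1 (the product rule in $K^{l,\rho,\theta,\mu}$) handles the quadratic pieces of $K(u,\cdot)$ such as $e^{Y}u\,\p_x u$ and $\bigl(\int_0^Y e^{Y'}\p_x u\,dY'\bigr)\p_Y u$, while Lemmas \ref{lemma1} and \ref{lemma2} convert each $x$- or $Y$-derivative into a loss weighted by $(\rho(s)-\rho')^{-1}$, $(\theta(s)-\theta')^{-1}$ or $\mu'$. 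The heat-operator bounds I plan to prove in Section 3 then turn these losses into precisely the four integrand shapes appearing in the vector version of \eqref{ack2}.

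The principal obstacle, as emphasized in the introduction, is the linear first-order term $2\p_Y u$ that is absent in \cite{sc}. There are two temptations: integrate by parts in $Y'$ against $G$, which moves the $\p_Y$ onto the kernel at the cost of a boundary contribution at $Y'=0$ and a factor $\sim 1/\sqrt{t-s}$; or apply Lemma \ref{lemma2} directly to $\p_Y u$, which produces a $(\theta''-\theta')^{-1}$ loss together with a genuinely lossless $\mu'|u|$ remainder. I would use the second route, because the $(\theta(s)-\theta')^{-1}$ loss is exactly one of the admissible forms in the vector version of \eqref{ack2}, while the lossless piece is absorbable into the Gronwall loop generated by ACK. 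The extra growth factor $e^{Y}$ appearing in the nonlinearity is controlled by the built-in margin $\mu+1 \to \mu_1+1$, which is why Proposition \ref{pro21} requires one extra unit of exponential decay on $u_0$.

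Once all four integrands in the modified \eqref{ack2} are produced with universal constants, ACK yields a unique solution $u \in K^{l,\rho_1,\theta_1,\mu_1+1}_{\beta_1,T}$ satisfying the estimate stated in the proposition. Finally, inverting the change of variables $\tilde u^{P} = e^{Y} u$ and using $\tilde u^P = u^P - U$ transfers this solution back to the original Prandtl system \eqref{Prandtl}--\eqref{bernoulli}, yielding Theorem \ref{th1}.
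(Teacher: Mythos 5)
Your overall architecture (mild formulation via the Neumann heat kernel, ACK with the vector of parameters $(\rho,\theta,\mu)$, Proposition \ref{proposition1} plus Lemmas \ref{lemma1}--\ref{lemma2} for the Cauchy estimate of $K$) is the same as the paper's. But your explicit choice for the crucial new term $2\p_Y u$ is the step that fails. You propose to ``apply Lemma \ref{lemma2} directly to $\p_Y u$'' so as to trade the $Y$-derivative for a $(\th(s)-\th')^{-1}$ loss; Lemma \ref{lemma2}, however, only bounds $\chi(Y)\p_Y f$ with $\chi(0)=0$, or $Y\p_Y f$ -- it gives no bound for the bare $\p_Y u$. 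This is not a cosmetic restriction: $\Sigma(\th)$ is a sector with vertex at $Y=0$, so for $Y\in\Sigma(\th')$ the distance to $\pa\Sigma(\th'')$ is of order $|Y|(\th''-\th')$, and the Cauchy estimate in the angular variable degenerates as $Y\to 0$ unless a factor vanishing at the vertex is present. In the nonlinear term $\bigl(\int_0^Y e^{Y'}\p_x u\,dY'+Y\p_x U\bigr)(\p_Y u+u)$ such a factor is supplied by the prefactor, which is why the Sammartino--Caflisch scheme covers it; the linear term $2\p_Y u$ comes with no such prefactor, and that is precisely the difficulty the paper highlights.

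The paper resolves it by the route you discarded as a ``temptation'': Lemma \ref{e31l} estimates $E_3(t)\p_Y u$ by moving $\p_{Y'}$ onto the kernel, where the boundary contribution $2E(Y,t-s)\p_x^{\a_2}u(x,0,s)$ at $Y'=0$ is harmless (and for two $Y$-derivatives the combination $-E(Y-Y')+E(Y+Y')$ vanishes at $Y'=0$), yielding exactly the admissible integrand $\int_0^t|u_1-u_2|_{\r',\th',\mu'+1}\,ds/\sqrt{t-s}$ in the vector version of \eqref{ack2}. Two smaller omissions in your plan: the bounds on the initial- and boundary-data operators require compatibility, so one must write $\mathcal{U}=E_1(t)\bigl(u_0-YU(x,0)\bigr)+YU(x,0)+E_2\bigl(U(x,t)-U(x,0)\bigr)$ (Lemma \ref{e11l} needs homogeneous Neumann data, Lemma \ref{e21l} needs $\phi|_{t=0}=0$), rather than $E_0[u_0]+E_b[U]$ as written; and ACK only gives the spatial bound $|u(t)|_{l,\r_1-\b_1 t,\th_1-\b_1 t,\mu_1-\b_1 t}\leq C$, after which the time-derivative part of the $K^{l,\cdot}_{\b_1,T}$ norm is recovered from the equation $\p_t u=\p_{YY}u+K(u,t)+2\p_Y u$.
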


\section{The Estimates of Heat Operator}
This section is devoted to some estimates of heat equation
in the spaces introduced in the last section.
\subsection{The Heat Kernel}
To solve Prandtl equations, we introduce the heat kernel
\begin{equation}
E(Y,t)=\frac{1}{\sqrt{4\pi t}}e^{-\frac{Y^2}{4t}},
\end{equation}
and
\begin{equation}
H(Y,t)=\frac{Y}{t}\frac{1}{\sqrt{4\pi t}}e^{-\frac{Y^2}{4t}}.
\end{equation}

We define the operator $E_1$ as the convolution of $E(Y,t)$
with the even extension to $Y<0$ with the function $u_0(x,Y),$ that is
\begin{equation}
E_1(t)u_0=\int_0^{\infty}[E(Y-Y',t)+E(Y+Y',t)]u_0(x,Y')\,\mathrm{d}Y'\,,
\end{equation}
$E_1(t)u_0$ solves the equation
\begin{equation}
\begin{split}
(\p_t-\p_{YY})E_1(t)u_0&=0,\\
\p_YE_1(t)u_0\mid_{Y=0}&=0,\\
E_1(t)u_0\mid_{t=0}&=u_0(x,Y).
\end{split}
\end{equation}

The operator $E_2$ is defined as
\begin{equation}
\begin{split}
E_2\phi=-\int^{\infty}_Y \int^t_0 H(Y',t-s)\phi(x,s)\,\mathrm{d}s\,\mathrm{d}Y'\,.
\end{split}
\end{equation}
It solves the equation
\begin{equation}
\begin{split}
(\p_t-\p_{YY})E_2\phi&=0,\\
\p_YE_2\phi\mid_{Y=0}&=\phi(x,t),\\
E_2\phi\mid_{t=0}&=0.
\end{split}
\end{equation}

The operator $E_3$ is defined as:
\begin{equation}
E_3\varphi=\int^t_0 \int^{\infty}_0[E(Y-Y',t-s)+E(Y+Y',t-s)]\varphi(Y',s)\,\mathrm{d}Y'\,\mathrm{d}s\,.
\end{equation}
It solves the equation
\begin{equation}
\begin{split}
(\p_t-\p_{YY})E_3\varphi&=\varphi(x,Y,t),\\
\p_YE_3\varphi\mid_{Y=0}&=0,\\
E_3\varphi\mid_{t=0}&=0.
\end{split}
\end{equation}

\subsection{Estimates of $E_1$}
\begin{lemma}\label{dYu}
Let $u\in K^{l,\r,\th,\mu}$ with $\p_Yu\mid_{Y=0}=0,$ then we have
that $E_1(t)u\in K^{l,\r,\th,\mu}$ for all $t>0$, and
\begin{equation}
\sup_{0\leq t\leq T}|E_1(t)u|_{l,\r,\th,\mu}\leq C|u|_{l,\r,\th,\mu}.
\end{equation}
\end{lemma}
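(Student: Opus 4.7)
The plan is to exploit the fact that $E_1$ is the heat convolution for Neumann data: the compatibility condition $\p_Y u|_{Y=0}=0$ ensures that the even extension of $u(x,\cdot)$ across $Y=0$ is $C^2$ across the origin, and $E_1(t)u$ then coincides with the whole-line heat convolution of this extension. Three ingredients of the $K^{l,\r,\th,\mu}$-norm must survive this convolution in $Y$: analyticity and $L^2(\Gamma(\i x))$ control in the strip $D(\r)$, analyticity of the output in $Y\in\Sigma(\th)$, and the exponential weight $e^{\mu\Re Y}$. I would verify them in turn.

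Since the kernel is independent of $x$ and $\p_x$ commutes with $E_1$, the $L^2$-estimate in $x$ follows from Minkowski's inequality applied inside the $Y'$-integral:
\begin{equation*}
|E_1(t)u(\cdot,Y)|_{0,\r}\leq\int_0^\infty[E(Y-Y',t)+E(Y+Y',t)]\,|u(\cdot,Y')|_{0,\r}\,dY'.
\end{equation*}
To handle the $Y$-derivatives allowed by the norm ($\a_1\leq 2$), I would use $\p_Y E(Y\mp Y',t)=\mp\p_{Y'}E(Y\mp Y',t)$ and integrate by parts. For the first derivative the boundary contribution at $Y'=0$ vanishes because $E(Y-Y',t)-E(Y+Y',t)$ vanishes there, yielding
\begin{equation*}
\p_Y E_1(t)u(x,Y)=\int_0^\infty[E(Y-Y',t)-E(Y+Y',t)]\,\p_{Y'}u(x,Y')\,dY',
\end{equation*}
and a second integration by parts, in which the new boundary term $2E(Y,t)\p_Y u|_{Y=0}=0$ is killed precisely by the hypothesis, gives
\begin{equation*}
\p_Y^2 E_1(t)u(x,Y)=\int_0^\infty[E(Y-Y',t)+E(Y+Y',t)]\,\p_{Y'}^2 u(x,Y')\,dY'.
\end{equation*}
Thus the bound on $\p_Y^{\a_1}\p_x^{\a_2}E_1(t)u$ reduces to a plain even/odd convolution estimate applied to $\p_Y^{\a_1}\p_x^{\a_2}u$.

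It remains to estimate the weighted convolution. For real $Y\geq 0$, using $|u(\cdot,Y')|_{0,\r}\leq e^{-\mu Y'}|u|_{l,\r,\th,\mu}$ and the substitution $Z=Y-Y'$ gives
\begin{equation*}
e^{\mu Y}\int_0^\infty E(Y-Y',t)e^{-\mu Y'}\,dY'=\int_{-\infty}^Y E(Z,t)e^{\mu Z}\,dZ\leq e^{\mu^2 t},
\end{equation*}
and the reflected kernel contributes the same bound since $(Y+Y')^2\geq (Y-Y')^2$ for $Y,Y'\geq 0$ forces $E(Y+Y',t)\leq E(Y-Y',t)$. Uniformly on $[0,T]$ this produces the constant $C=2e^{\mu^2 T}$ along the real $Y$-axis. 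The main obstacle, where I expect to spend the real effort, is extending this estimate to complex $Y\in\Sigma(\th)$ while simultaneously preserving analyticity in $Y$ and the weight $e^{\mu\Re Y}$. I would rotate the $Y'$-contour from $[0,\infty)$ into the sector $\Sigma(\th)$ via Cauchy's theorem, justified by the Gaussian decay of $E$ combined with the exponential decay of $u(\cdot,Y')$ in $\Sigma(\th)$, and check that for $Y$ and $Y'$ in the rotated sector, $\Re[(Y-Y')^2]$ remains bounded below by a positive multiple of $|Y-Y'|^2$, so that the Gaussian substitution still yields an $e^{\mu^2 t}$-type bound. Summing the resulting inequalities over $\a_1\leq 2$ and $\a_1+\a_2\leq l$ then delivers the stated estimate.
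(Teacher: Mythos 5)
Your proposal is correct and follows essentially the same route as the paper: commute $\p_x$ with $E_1$, integrate by parts in $Y$ so that the first-derivative boundary term cancels by the parity of the kernel and the second-derivative boundary term $2E(Y,t)\p_Yu|_{Y=0}$ is killed by the hypothesis, and then bound the exponentially weighted Gaussian convolution. The only part you leave as a sketch, the extension to complex $Y\in\Sigma(\th)$, is exactly the device used in the paper (a change of variables along the shifted contour together with $|e^{-\eta^2}|\leq e^{-c\,\Re\eta^2}$ for a small sector angle, $\th\leq\pi/6$), so no new idea is missing.
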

\begin{proof}
For $\a\leq l,$ set $\eta=\frac{Y'-Y}{\sqrt{4t}},\zeta=\frac{Y'+Y}{\sqrt{4t}},$ we have that
\begin{equation}
\begin{split}
&\sup_{Y\in\Sigma(\th)}e^{\mu\Re Y}|\p_x^{\a}E_1(t)u|_{0,\r}\\=&\sup_{Y\in\Sigma(\th)}e^{\mu\Re Y}|E_1(t)\p_x^{\a}u|_{0,\r}\\
=&\sup_{Y\in\Sigma(\th)}e^{\mu\Re Y}|\int^{\infty}_0(\frac{1}{\sqrt{4\pi t}}e^{-\frac{(Y-Y')^2}{4t}}+
\frac{1}{\sqrt{4\pi t}}e^{-\frac{(Y+Y')^2}{4t}})\p_x^{\a}u(x,Y')dY'|_{0,\r}\\
=&\sup_{Y\in\Sigma(\th)}e^{\mu\Re Y}|\int^{\infty}_{-\frac{Y}{\sqrt{4t}}}e^{-\eta^2}\p_x^{\a}u(x,Y+\sqrt{4t}\eta)d\eta
+\int^{\infty}_{\frac{Y}{\sqrt{4t}}}e^{-\zeta^2}\p_x^{\a}u(x,-Y+\sqrt{4t}\zeta)d\zeta|_{0,\r}\\
\leq &\sup_{Y\in\Sigma(\th)}|\int^{\infty}_{-\frac{Y}{\sqrt{4t}}}e^{\mu(Y+\sqrt{4t}\eta)}e^{-\eta^2-\mu\sqrt{4t}\eta}
\p_x^{\a}u(x,Y+\sqrt{4t}\eta)d\eta|_{0,\r}\\
&+\sup_{Y\in\Sigma(\th)}|\int^{\infty}_{\frac{Y}{\sqrt{4t}}}e^{\mu(-Y+\sqrt{4t}\zeta)}e^{-\eta^2-\mu\sqrt{4t}\eta+2\mu Y}
\p_x^{\a}u(x,Y+\sqrt{4t}\zeta)d\zeta|_{0,\r}\\
\leq&\sup_{Y\in\Sigma(\th)}e^{\mu\Re Y}|\p_x^{\a}u(x,Y)|_{0,\r}
(\int^{\infty}_{-\frac{Y}{\sqrt{4t}}}e^{-\eta^2-\mu\sqrt{4t}\eta}d\eta+\int^{\infty}_{\frac{Y}{\sqrt{4t}}}e^{-\eta^2-\mu\sqrt{4t}\eta+2\mu Y}
d\zeta)\\
\leq &C \sup_{Y\in\Sigma(\th)}e^{\mu\Re Y}|\p_x^{\a}u(x,Y)|_{0,\r},
\end{split}
\end{equation}
Here we use the fact that for $\th\leq \pi/6,$ there exists a constant $c$, such that
$|e^{-\eta^2}|\leq E^{-c \Re \eta^2}$, then we deduce that
\begin{equation}
\mid\int^{\infty}_{-\frac{Y}{\sqrt{4t}}}e^{-\eta^2-\mu\sqrt{4t}\eta}d\eta+\int^{\infty}_{\frac{Y}{\sqrt{4t}}}e^{-\eta^2-\mu\sqrt{4t}\eta+2\mu Y}
d\zeta\mid\leq C,
\end{equation}
where $C$ is independent on $Y$. For $\a\leq l-1,$
\begin{equation}
\begin{split}
&\sup_{Y\in\Sigma(\th)}e^{\mu\Re Y}|\p_Y\p_x^{\a}E_1(t)u|_{0,\r}\\
=&\sup_{Y\in\Sigma(\th)}e^{\mu\Re Y}|\int^{\infty}_0\p_Y(\frac{1}{\sqrt{4\pi t}}e^{-\frac{(Y-Y')^2}{4t}}+
\frac{1}{\sqrt{4\pi t}}e^{-\frac{(Y+Y')^2}{4t}})\p_x^{\a}u(x,Y')dY'|_{0,\r}\\
=&\sup_{Y\in\Sigma(\th)}e^{\mu\Re Y}|\int^{\infty}_0\p_{Y'}(-\frac{1}{\sqrt{4\pi t}}e^{-\frac{(Y-Y')^2}{4t}}+
\frac{1}{\sqrt{4\pi t}}e^{-\frac{(Y+Y')^2}{4t}})\p_x^{\a}u(x,Y')dY'|_{0,\r}\\
=&\sup_{Y\in\Sigma(\th)}e^{\mu\Re Y}|-\int^{\infty}_0(-\frac{1}{\sqrt{4\pi t}}e^{-\frac{(Y-Y')^2}{4t}}+
\frac{1}{\sqrt{4\pi t}}e^{-\frac{(Y+Y')^2}{4t}})\p_{Y'}\p_x^{\a}u(x,Y')dY'|_{0,\r}\\
\leq &C \sup_{Y\in\Sigma(\th)}e^{\mu\Re Y}|\p_Y\p_x^{\a}u|_{0,\r},
\end{split}
\end{equation}
For $\a\leq l-2,$ we have that
\begin{equation}
\begin{split}
&\sup_{Y\in\Sigma(\th)}e^{\mu\Re Y}|\p_{YY}\p_x^{\a}E_1(t)u|_{0,\r}\\
=&\sup_{Y\in\Sigma(\th)}e^{\mu\Re Y}|\int^{\infty}_0\p_{YY}(\frac{1}{\sqrt{4\pi t}}e^{-\frac{(Y-Y')^2}{4t}}+
\frac{1}{\sqrt{4\pi t}}e^{-\frac{(Y+Y')^2}{4t}})\p_x^{\a}u(x,Y')dY'|_{0,\r}\\
=&\sup_{Y\in\Sigma(\th)}e^{\mu\Re Y}|\int^{\infty}_0\p_{Y'}\p_Y(-\frac{1}{\sqrt{4\pi t}}e^{-\frac{(Y-Y')^2}{4t}}+
\frac{1}{\sqrt{4\pi t}}e^{-\frac{(Y+Y')^2}{4t}})\p_x^{\a}u(x,Y')dY'|_{0,\r}\\
=&\sup_{Y\in\Sigma(\th)}e^{\mu\Re Y}|-\int^{\infty}_0\p_{Y'}(\frac{1}{\sqrt{4\pi t}}e^{-\frac{(Y-Y')^2}{4t}}+
\frac{1}{\sqrt{4\pi t}}e^{-\frac{(Y+Y')^2}{4t}})\p_{Y'}\p_x^{\a}u(x,Y')dY'|_{0,\r}\\
=&\sup_{Y\in\Sigma(\th)}e^{\mu\Re Y}|\int^{\infty}_0(\frac{1}{\sqrt{4\pi t}}e^{-\frac{(Y-Y')^2}{4t}}+
\frac{1}{\sqrt{4\pi t}}e^{-\frac{(Y+Y')^2}{4t}})\p_{Y'Y'}\p_x^{\a}u(x,Y')dY'|_{0,\r}\\
\leq &C \sup_{Y\in\Sigma(\th)}e^{\mu\Re Y}|\p_Y^2\p_x^{\a}u|_{0,\r},
\end{split}
\end{equation}
the last equality was established because the boundary term vanished due to the condition
$\p_Yu\mid_{Y=0}=0.$
Then we complete the proof of this lemma by the definition of the norm of $K^{l,\r,\th,\mu}$.
\end{proof}

Note that $\p_tE_1(t)u=\p_{YY}u$, the definition of the norm of $K^{l,\r,\th,\mu}_{\b,T}$
and the above estimate, we have the following estimates:
\begin{lemma}
Let $u\in K^{l,\r,\th,\mu}$ with $\p_Yu\mid_{Y=0}=0,$ then for any $ \b>0,$ and $T>0,$ we have that
$E_1(t)u\in K^{l,\r,\th,\mu}_{\b,T}$ and
\begin{equation}
|E_1(t)u|_{l,\r,\th,\mu,\b,T}\leq C |u|_{l,\r,\th,\mu}.
\end{equation}
\end{lemma}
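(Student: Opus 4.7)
The plan is to reduce the claim directly to Lemma \ref{dYu}, using two observations: the norms $|\cdot|_{l,\r',\th',\mu'}$ are monotone in the parameters (smaller $(\r',\th',\mu')$ gives a smaller norm, since the strip $D(\r')$ and sector $\Sigma(\th')$ shrink and the weight $e^{\mu'\Re Y}$ decreases), and $E_1(t)u$ satisfies the heat equation $\p_tE_1(t)u=\p_{YY}E_1(t)u$. No computation beyond those already performed in Lemma \ref{dYu} should be required.

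First I would handle the spatial terms $\p_Y^{\a_1}\p_x^{\a_2}$ (with $\a_1\leq 2$ and $\a_1+\a_2\leq l$) in the definition of $|\cdot|_{l,\r,\th,\mu,\b,T}$. Fix $t\in[0,T]$, implicitly assuming $\b T<\min(\r,\th,\mu)$ so that the target norm is well defined, and apply Lemma \ref{dYu} at the shrunken parameters $(\r-\b t,\th-\b t,\mu-\b t)$. Since $u\in K^{l,\r,\th,\mu}$ embeds into $K^{l,\r-\b t,\th-\b t,\mu-\b t}$ with a smaller norm,
$$|\p_Y^{\a_1}\p_x^{\a_2}E_1(t)u|_{0,\r-\b t,\th-\b t,\mu-\b t}\leq |E_1(t)u|_{l,\r-\b t,\th-\b t,\mu-\b t}\leq C|u|_{l,\r,\th,\mu},$$
uniformly in $t$. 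For the time-derivative terms, the heat equation yields $\p_t\p_x^\a E_1(t)u=\p_{YY}\p_x^\a E_1(t)u$ for $\a\leq l-2$, which is precisely the $(\a_1,\a_2)=(2,\a)$ summand already controlled (legitimate because $2+\a\leq l$). Summing all contributions gives $|E_1(t)u|_{l,\r,\th,\mu,\b,T}\leq C|u|_{l,\r,\th,\mu}$.

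What remains is the continuity-in-$t$ condition required for membership in $K^{l,\r,\th,\mu}_{\b,T}$: namely $E_1(t)u\in C^0([0,T],K^{l,\r,\th,\mu})$ and $\p_t\p_x^\a E_1(t)u\in C^0([0,T],K^{0,\r,\th,\mu})$ for $\a\leq l-2$. For $t>0$ this is immediate from smoothness of the heat kernel together with dominated convergence, the uniform bounds of the previous step supplying the dominating function. I expect the main obstacle to be strong continuity at $t=0$ in the exponentially weighted analytic norm: one must check that the change of variables $Y'=Y+\sqrt{4t}\,\eta$ used in the proof of Lemma \ref{dYu}, combined with the weight $e^{\mu\Re Y}$ and the analyticity of $u$ on $\Sigma(\th)$, still delivers $E_1(t)u\to u$ in $|\cdot|_{l,\r,\th,\mu}$ as $t\downarrow 0$. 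This is a routine argument mirroring the estimates already performed.
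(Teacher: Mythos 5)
Your argument is correct and is essentially the paper's own: the paper likewise deduces this lemma in one line from Lemma \ref{dYu}, the identity $\p_t E_1(t)u=\p_{YY}E_1(t)u$ (so the time-derivative terms are controlled by the already-estimated $\a_1=2$ terms), and the monotonicity of the norms under shrinking $(\r-\b t,\th-\b t,\mu-\b t)$. Your additional attention to continuity in $t$ is a detail the paper leaves implicit, but it does not change the route.
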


\begin{lemma}\label{e11l}
Let $u\in K^{l,\r,\th,\mu},\;f\in H^{l,\r}$ satisfy $\p_Yu\mid_{Y=0}=f,$ then $E_1(t)(u-Yf)+Yf\in K^{l,\r,\th,\mu}$
and
\begin{equation}\label{e11e}
|E_1(t)(u-Yf)+Yf|_{l,\r,\th,\mu}\leq C(|u|_{l,\r,\th,\mu}+|f|_{l,\r}).
\end{equation}
This implies that for any $ \b>0,$ and $T>0,$
$E_1(t)(u-Yf)+Yf\in K^{l,\r,\th,\mu}_{\b,T}$ and
\begin{equation}
|E_1(t)(u-Yf)+Yf|_{l,\r,\th,\mu,\b,T}\leq C (|u|_{l,\r,\th,\mu}+|f|_{l,\r}).
\end{equation}
\end{lemma}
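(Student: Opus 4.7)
My strategy is to reinterpret $w(x,Y,t):=E_1(t)(u-Yf)+Yf$ in a way that avoids the apparent divergence: since $u-Yf$ does not decay at infinity, $E_1(t)(u-Yf)$ is not by itself meaningful in $K^{l,\rho,\theta,\mu}$, but the combined expression is. Concretely I would use the splitting
\[
w \;=\; E_1(t)u + f(x)\,g(Y,t),\qquad g(Y,t):=Y-\int_0^{\infty}[E(Y-Y',t)+E(Y+Y',t)]\,Y'\,dY'.
\]
Evaluating the Gaussian integral in closed form yields
\[
g(Y,t) \;=\; Y\,\mathrm{erfc}\!\bigl(Y/\sqrt{4t}\bigr) \;-\; 2\sqrt{t/\pi}\,e^{-Y^2/(4t)},
\]
which is independent of $x$, analytic in $\Sigma(\theta)$, and decays super-exponentially as $Y\to\infty$. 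This reduces the lemma to estimating $E_1(t)u$ (without the boundary condition $\partial_Yu|_{Y=0}=0$) plus the explicit corrector $f(x)g(Y,t)$.

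For the corrector $fg$, since $g$ depends only on $(Y,t)$, I would bound $|fg|_{l,\rho,\theta,\mu}\le |f|_{l,\rho}\sum_{\alpha_1\le 2}\sup_Y e^{\mu\Re Y}|\partial_Y^{\alpha_1}g(Y,t)|$. Direct differentiation gives the two key identities $\partial_Yg=\mathrm{erfc}(Y/\sqrt{4t})$ and $\partial_{YY}g=-2E(Y,t)$. Using $\Re(Y^2)\ge c(\Re Y)^2$ for $Y\in\Sigma(\theta)$ with $\theta$ small (cf.\ the proof of Lemma~\ref{dYu}) and the asymptotic bound $|\mathrm{erfc}(z)|\le Ce^{-\Re z^2}/\max(1,|z|)$ for $\Re z>0$, each $e^{\mu\Re Y}|\partial_Y^{\alpha_1}g|$ is controlled by completing the square in $\mu\Re Y-c(\Re Y)^2/(4t)$, with a bound uniform in $t\in[0,T]$. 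For $E_1(t)u$ itself, I would revisit Lemma~\ref{dYu}: its zeroth- and first-order $Y$-derivative estimates never used the condition $\partial_Yu|_{Y=0}=0$, so they apply verbatim, while the second-derivative integration by parts now picks up a boundary contribution, giving
\[
\partial_{YY}\partial_x^\alpha E_1(t)u \;=\; E_1(t)\,\partial_{YY}\partial_x^\alpha u \;+\; 2\,\partial_x^\alpha f(x)\,E(Y,t).
\]

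The crucial step is the cancellation of the singular-in-$t$ terms in the second-derivative estimate of $w$:
\[
\partial_{YY}\partial_x^\alpha w \;=\; \bigl[E_1(t)\partial_{YY}\partial_x^\alpha u + 2\,\partial_x^\alpha f\cdot E(Y,t)\bigr] \;+\; \partial_x^\alpha f\cdot\bigl(-2E(Y,t)\bigr) \;=\; E_1(t)\partial_{YY}\partial_x^\alpha u.
\]
Thus $\partial_{YY}\partial_x^\alpha w$ is controlled purely by the zeroth-derivative estimate of Lemma~\ref{dYu} applied to $\partial_{YY}\partial_x^\alpha u$. Summing over $\alpha_1\le 2$ and $\alpha_1+\alpha_2\le l$, and combining with the zeroth- and first-$Y$-derivative estimates of $E_1(t)u$ together with the pointwise bounds on $g$ and $\partial_Yg$ above, yields \eqref{e11e}. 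The $K^{l,\rho,\theta,\mu}_{\beta,T}$ version then follows at once since $w$ solves the heat equation, so $\partial_tw=\partial_{YY}w=E_1(t)\partial_{YY}u$, reducing the time-derivative estimate to the spatial one; all $t$-dependent constants remain bounded on $[0,T]$.

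The main obstacle is conceptual rather than computational: one must identify and book-keep the corrector $fg$ so that (i) the non-decaying $Yf$ piece is canceled by the matching piece of $E_1(t)(Yf)$, and (ii) the singular (like $t^{-1/2}$) boundary term in $\partial_{YY}E_1(t)u$ is absorbed exactly by $f\partial_{YY}g$. Once the algebraic identity $\partial_{YY}g=-2E(Y,t)$ is recognised, the remaining estimates proceed by the same techniques already used in Lemma~\ref{dYu}.
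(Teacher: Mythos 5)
Your proposal is correct, and it hinges on the same closed-form computation that drives the paper's proof, namely $E_1(t)Y$ evaluated explicitly (your $g(Y,t)=Y-E_1(t)Y=Y\,\mathrm{erfc}(Y/\sqrt{4t})-2\sqrt{t/\pi}\,e^{-Y^2/(4t)}$ is the corrector the paper also produces; note the paper's displayed formula for $E_1(t)Y$ has a sign slip in the Gaussian term which your erfc form fixes). The organization, however, is genuinely different. The paper restores the compatibility condition by subtracting the auxiliary decaying profile $Ye^{-(\mu+1)Y}f$, writes $E_1(t)(u-Yf)+Yf=E_1(t)\bigl(u-Ye^{-(\mu+1)Y}f\bigr)+E_1(t)\bigl(Ye^{-(\mu+1)Y}f\bigr)+f\bigl(Y-E_1(t)Y\bigr)$, applies Lemma \ref{dYu} verbatim to the first piece (whose $Y$-derivative vanishes at $Y=0$), and bounds the other two explicit pieces by $C|f|_{l,\r}$. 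You instead keep $E_1(t)u$ intact, observe that the zeroth- and first-order estimates of Lemma \ref{dYu} do not use the boundary condition, and handle the only problematic term via the exact identity $\p_{YY}\bigl(E_1(t)u+fg\bigr)=E_1(t)\p_{YY}u$, i.e. the cancellation of $2fE(Y,t)$ against $f\,\p_{YY}g=-2fE(Y,t)$. What your route buys is transparency about the $t^{-1/2}$-singular boundary contributions: in the paper's splitting these singular terms sit separately inside $\p_{YY}E_1(t)(Ye^{-(\mu+1)Y}f)$ and $\p_{YY}\bigl(f(Y-E_1(t)Y)\bigr)$ and only cancel in the sum, so the individual bounds asserted there are not uniform as $t\to0^+$ unless one notices exactly the cancellation you make explicit; your version also feeds directly into the $K^{l,\r,\th,\mu}_{\b,T}$ statement since $\p_t w=\p_{YY}w=E_1(t)\p_{YY}u$. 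What the paper's route buys is that it never reopens the proof of Lemma \ref{dYu}: all kernel manipulations are quoted, at the cost of the extra $Ye^{-(\mu+1)Y}$ bookkeeping. Your weighted bounds on $g$ and $\p_Yg=\mathrm{erfc}(Y/\sqrt{4t})$ over $\Sigma(\th)$ (using $\Re(Y^2)\ge c(\Re Y)^2$ for $\th$ small and completing the square in $\mu\Re Y-c(\Re Y)^2/(4t)$) are exactly what is needed and are uniform on $[0,T]$, so the argument is complete.
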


\begin{proof}
First, we check that $E_1(t)Y=Y:$
\begin{equation}
\begin{split}
E_1(t)Y=&\int^{\infty}_0(\frac{1}{\sqrt{4\pi t}}e^{-\frac{(Y-Y')^2}{4t}}+
\frac{1}{\sqrt{4\pi t}}e^{-\frac{(Y+Y')^2}{4t}})Y'dY'\\
=&\frac{1}{\sqrt{\pi}}\int_{-\frac{Y}{\sqrt{4t}}}^\infty e^{-\eta^2}(\sqrt{4t}\eta+Y)d\eta+
\frac{1}{\sqrt{\pi}}\int_{\frac{Y}{\sqrt{4t}}}^\infty e^{-\eta^2}(\sqrt{4t}\eta-Y)d\eta\\
=&\frac{1}{\sqrt{\pi}}\int_{-\infty}^{+\infty} e^{-\eta^2}(\sqrt{4t}\eta+Y)d\eta-
2\frac{1}{\sqrt{\pi}}\int_{-\infty}^{-\frac{Y}{\sqrt{4t}}}e^{-\eta^2}(\sqrt{4t}\eta+Y)d\eta\\
=&Y-\frac{\sqrt{4t}}{\sqrt{\pi}}e^{-(\frac{Y}{\sqrt{4t}})^2}-\frac{2}{\sqrt{\pi}}Y\int_{-\infty}^{-\frac{Y}{\sqrt{4t}}}e^{-\eta^2}d\eta.
\end{split}
\end{equation}
Thus,
\begin{equation}
\begin{split}
&E_1(t)(u-Yf)+Yf\\
=&E_1(t)(u-Ye^{-(\mu+1)Y}f)+E_1(t)(Ye^{-(\mu+1)Y})\\&+
\frac{2}{\sqrt{\pi}}\Big(\sqrt{t}e^{-(\frac{Y}{\sqrt{4t}})}+Y\int_{-\infty}^{-\frac{Y}{\sqrt{4t}}}e^{-\eta^2}d\eta\Big)f.
\end{split}
\end{equation}
As $\p_Y(u-Ye^{-(\mu+1)Y}f)\mid_{Y=0}=0,$ we have
\begin{equation}
|E_1(t)(u-Ye^{-(\mu+1)Y}f)|_{l,\r,\th,\mu}\leq C(|u|_{l,\r,\th,\mu}+|f|_{l,\r}),
\end{equation}
and it is easy to check that
\begin{equation}
|\frac{2}{\sqrt{\pi}}\Big(\sqrt{t}e^{-(\frac{Y}{\sqrt{4t}})}+Y\int_{-\infty}^{-\frac{Y}{\sqrt{4t}}}e^{-\eta^2}d\eta\Big)f|_{l,\r,\th,\mu}\leq C|f|_{l,\r},
\end{equation}
and
\begin{equation}
|E_1(t)(Ye^{-(\mu+1)Y}f)|_{l,\r,\th,\mu}\leq C|f|_{l,\r},
\end{equation}
then we establish \eqref{e11e}.
\end{proof}

\subsection{Estimate of $E_2$}
\begin{lemma}\label{e21l}
Let $\phi\in K^{l,\r}_{\b,T}$ with $\phi\mid_{t=0}=0,$ then $E_2\phi\in K^{l,\r,\th,\mu}_{\b,T}$ and it
satisfies
\begin{equation}
|E_2\phi|_{l,\r,\th,\mu,\b,T}\leq C|\phi|_{l,\r,\b,T}.
\end{equation}
\end{lemma}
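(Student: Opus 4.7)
The plan is to first reduce the definition of $E_2\phi$ to a simple time-convolution with the standard heat kernel, and then estimate each spatial and temporal derivative by heat-kernel bounds in the complex sector.

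First I would rewrite $E_2\phi$ in a more convenient form. Since $H(Y',t-s)=-2\p_{Y'}E(Y',t-s)$ and $E(Y',t-s)\to 0$ as $Y'\to\infty$, an integration by parts in $Y'$ collapses the outer integral to
$$E_2\phi(x,Y,t)=-2\int_0^t E(Y,t-s)\,\phi(x,s)\,ds,$$
from which spatial derivatives pass inside the integral, giving for $\a_1\in\{0,1\}$ and $\a_1+\a_2\le l$
$$\p_Y^{\a_1}\p_x^{\a_2}E_2\phi=-2\int_0^t \p_Y^{\a_1}E(Y,t-s)\,\p_x^{\a_2}\phi(x,s)\,ds.$$
For the second $Y$-derivative and for $\p_t$ (which equals $\p_{YY}$ on $E_2\phi$ by construction) I would use the heat equation $\p_{YY}E(Y,t-s)=-\p_s E(Y,t-s)$ together with an integration by parts in $s$. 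Since $\phi\mid_{s=0}=0$ by hypothesis and $E(Y,0^+)=0$ for $\Re Y>0$, both boundary terms vanish and
$$\p_{YY}\p_x^{\a_2}E_2\phi=-2\int_0^t E(Y,t-s)\,\p_s\p_x^{\a_2}\phi(x,s)\,ds.$$
The constraint $\a_2\le l-2$ in the definition of $|\cdot|_{l,\r,\th,\mu,\b,T}$ is exactly what is needed for $\p_s\p_x^{\a_2}\phi$ to be controlled by $|\phi|_{l,\r,\b,T}$, whose definition includes $\p_t\p_x^{\a}$ for $\a\le l-1$.

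With these representations the problem reduces to showing
$$e^{(\mu-\b t)\Re Y}\int_0^t|\p_Y^{\a_1}E(Y,t-s)|\,ds\le C$$
uniformly for $Y\in\Sigma(\th-\b t)$, $0\le t\le T$ and $\a_1\in\{0,1,2\}$. Combined with Minkowski's inequality in $x$ and the monotonicity $|\p_x^{\a_2}\phi(\cdot,s)|_{0,\r-\b t}\le|\p_x^{\a_2}\phi(\cdot,s)|_{0,\r-\b s}$ (valid since $s\le t$), this yields the desired bound. For $\th\le\pi/6$ one has $\Re(Y^2)\ge c|Y|^2$ on $\Sigma(\th)$, so $|E(Y,t-s)|\le(4\pi(t-s))^{-1/2}\exp\!\bigl(-c|Y|^2/(4(t-s))\bigr)$; completing the square absorbs the exponential weight $e^{\mu\Re Y}$ at the cost of a bounded factor $e^{C\mu^2T}$, and the remaining $\int_0^t(t-s)^{-1/2}\,ds\le 2\sqrt{T}$ is finite. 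The cases $\a_1=1,2$ are handled in the same spirit, the former using $\p_Y E=-\tfrac{Y}{2(t-s)}E$ together with the substitution $\eta=|Y|/\sqrt{t-s}$, the latter relying on the IBP in $s$ derived above.

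The principal obstacle is the careful tracking of the exponential weight $e^{\mu\Re Y}$ against the Gaussian decay in the sector $\Sigma(\th)$: it is exactly here that the aperture restriction $\th\le\pi/6$ becomes essential, and where one must verify that the constants in the completion of the square depend only on $\mu_0$, $\b$ and $T$. A secondary point is to check that the IBP in $s$ producing $\p_s\p_x^{\a_2}\phi$ does not overshoot the regularity cap $\a\le l-1$ built into the $K^{l,\r}_{\b,T}$ norm, which is precisely why the $\p_t$-summation in $|\cdot|_{l,\r,\th,\mu,\b,T}$ is truncated at $\a\le l-2$.
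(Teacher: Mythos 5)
Your proof is correct and is essentially the paper's argument in a cleaner packaging. The paper keeps the defining double integral and substitutes $\eta=Y/\sqrt{4(t-s)}$: it bounds $\partial_Y E_2\phi=\int_0^t H(Y,t-s)\phi\,ds$ first, integrates in $Y'$ to get $E_2\phi$ itself, and obtains $\partial_{YY}E_2\phi=\partial_t E_2\phi$ by differentiating the substituted integral, where $\phi|_{t=0}=0$ kills the boundary term and produces $\partial_t\phi$. You instead collapse the $Y'$-integral via $H(Y',t-s)=-2\partial_{Y'}E(Y',t-s)$ to the single-layer form $E_2\phi=-2\int_0^t E(Y,t-s)\phi(x,s)\,ds$ and trade $\partial_{YY}$ (and $\partial_t$) for $\partial_s\phi$ by integration by parts in $s$, again using $\phi|_{t=0}=0$; this is the same mechanism (one derivative is exchanged for a time derivative of $\phi$, which is why the cap $\alpha\le l-1$ on $\partial_t\partial_x^{\alpha}\phi$ in $K^{l,\rho}_{\beta,T}$ suffices), and the weight $e^{\mu\Re Y}$ is absorbed by the Gaussian on the sector exactly as in the paper. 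One imprecision: your displayed reduction claims $e^{(\mu-\beta t)\Re Y}\int_0^t|\partial_Y^{\alpha_1}E(Y,t-s)|\,ds\le C$ uniformly for $\alpha_1\in\{0,1,2\}$, but for $\alpha_1=2$ this is false uniformly in $Y$ (near $Y=0$ the integrand is of order $(t-s)^{-3/2}$ and the integral blows up like $|Y|^{-1}$); only $\alpha_1\in\{0,1\}$ is actually needed, since your own treatment of the second derivative goes through the IBP in $s$ and uses only the $\alpha_1=0$ bound, so the statement of the reduction should be corrected even though the argument as executed stands. A minor further point: the boundary term at $s=t$ in that IBP vanishes only for $\Re Y>0$; the value at $Y=0$ is recovered by continuity, the resulting bound being uniform on $\Sigma(\theta-\beta t)$.
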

\begin{proof}
As
\begin{equation}
\begin{split}
&\sup_{Y\in \Sigma(\th)}e^{2\mu\Re Y}|\int_0^t\frac{Y}{t-s}
\frac{e^{-\frac{Y^2}{4(t-s)}}}{\sqrt{4\pi(t-s)}}\phi(s)ds|\\
=&\sup_{Y\in \Sigma(\th)}e^{2\mu\Re Y}|\int_{\frac{Y}{\sqrt{4t}}}d\eta\phi(t-\frac{Y^2}{4\eta^2})e^{-\eta^2}|
\\\leq & C\sup_t|\phi(t)|,
\end{split}
\end{equation}
then we could also have
\begin{equation}
\begin{split}
&\sup_{Y\in \Sigma(\th)}e^{\mu\Re Y}|\int_Y^\infty dY'\int_0^t\frac{Y'}{t-s}
\frac{e^{-\frac{Y'^2}{4(t-s)}}}{\sqrt{4\pi(t-s)}}\phi(s)ds|\\
\leq & C\sup_t|\phi(t)|,
\end{split}
\end{equation}
and by $\phi(0)=0,$ it can be show that
\begin{equation}
\begin{split}
&\sup_{Y\in \Sigma(\th)}e^{\mu\Re Y}|\int_0^t\p_Y\frac{Y}{t-s}
\frac{e^{-\frac{Y^2}{4(t-s)}}}{\sqrt{4\pi(t-s)}}\phi(s)ds|\\
=&2\sup_{Y\in \Sigma(\th)}e^{\mu\Re Y}|\int_{\frac{Y}{\sqrt{4t}}}d\eta\phi(t-\frac{Y^2}{4\eta^2})e^{-\eta^2}\frac{Y}{\eta^2}|
\\\leq & C\sup_t|\phi'(t)|,
\end{split}
\end{equation}
the above estimate and the fact that $\p_t E_2\phi=\p_{YY}E_2 \phi$ show that
\begin{equation}
\begin{split}
&|E_2\phi|_{l,\r,\th,\mu,\b,T}\\
\leq &C\sum_{\a_1\leq2}\sum_{\a_2\leq l-\a_1}\sup_{0\leq t\leq T}\sup_{Y\in \Sigma(\th-\b t)}
e^{(\mu-\b t)\Re Y}|\p^{\a_1}_Y\p^{\a_2}_x E_2\phi|_{0,\r-\b t}\\
\leq &C\sum_{\a_1\leq2}\sum_{\a_2\leq l-\a_1}\sup_{0\leq t\leq T}\sup_{Y\in \Sigma(\th-\b t)}
e^{(\mu-\b t)\Re Y}\p^{\a_1}_Y E_2|\p^{\a_2}_x\phi|_{0,\r-\b t}\\
\leq &C|\phi|_{l,\r,\b,T}.
\end{split}
\end{equation}
\end{proof}

\subsection{The Estimate of $E_3$}

First, we get the estimate of $E_3(t)\p_Yu$. This term arises when we  transfer
the Robin boundary condition into the Neumann Boundary condition. For some case, we need to
transfer the derivative with respect to $Y$ to the heat kernel.

\begin{lemma}\label{e31l}
Assume that $u(t)\in K^{l,\r-\b t,\th-\b t, \mu-\b t}$, for $0\leq t\leq T.$
Then for any $ \r'<\r-\b t,\th'<\th-\b t,\mu'<\mu-\b t$, we have that
\begin{equation}
|E_3(t)\p_Y u|_{l,\r',\th',\mu'}\leq C\int^t_0ds\frac{1}{\sqrt{t-s}}|u(s)|_{l,\r',\th',\mu'}.
\end{equation}
\end{lemma}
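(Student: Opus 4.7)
The plan is to extract a factor $(t-s)^{-1/2}$ from $E_3(\p_Y u)$ by integrating by parts in $Y'$ to move $\p_{Y'}$ off $u$, and then substituting $\eta=(Y'\mp Y)/\sqrt{4(t-s)}$ in each of the two heat kernel pieces and integrating by parts once more in $\eta$ to remove $\p_\eta$ from $u$. The scaling of the substitution is what produces the $(t-s)^{-1/2}$ prefactor, in the same spirit as the Gaussian analysis in the proof of Lemma~\ref{dYu}.

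Concretely, I would start from the definition of $E_3(\p_Y u)(Y,t)$, integrate by parts in $Y'$ (the $Y'=\infty$ boundary vanishes by decay, while the $Y'=0$ boundary produces $-2E(Y,t-s)u(0,s)$), then in each interior integral substitute $\eta$ and integrate by parts in $\eta$. The result is a representation of $E_3(\p_Y u)$ as $\int_0^t \frac{ds}{\sqrt{4\pi(t-s)}}$ times a brace whose three terms are $-2 e^{-Y^2/(4(t-s))} u(0,s)$ together with the two interior pieces $2\int_{\mp Y/\sqrt{4(t-s)}}^\infty \eta e^{-\eta^2}\, u(\pm Y+\sqrt{4(t-s)}\eta,s)\,d\eta$. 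Each of these three terms is bounded pointwise by $C e^{-\mu'\Re Y}|u(s)|_{l,\r',\th',\mu'}$, uniformly for $Y\in\Sigma(\th')$, by the same $|e^{-\eta^2}|\leq e^{-c\Re\eta^2}$ estimate for $\th'\leq\pi/6$ that was used in Lemma~\ref{dYu}. This handles the $\a_1=\a_2=0$ component of the norm.

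For higher derivatives, $\p_x$ commutes with $E_3$, so $\p_x^{\a_2}E_3(\p_Y u)=E_3(\p_Y\p_x^{\a_2}u)$ and the preceding argument applies with $u$ replaced by $\p_x^{\a_2}u$. For $\a_1\in\{1,2\}$, I would differentiate the representation in $Y$ directly. The critical point is a cancellation of the $u(0,s)$ boundary contributions: $\p_Y$ acting on the Gaussian prefactor of the first term produces $\frac{Y}{t-s}e^{-Y^2/(4(t-s))}u(0,s)$, while $\p_Y$ acting on the lower limits $\mp Y/\sqrt{4(t-s)}$ of the two interior integrals produces two contributions equal to $-\frac{Y}{2(t-s)}e^{-Y^2/(4(t-s))}u(0,s)$ each, and the three coefficients sum to zero. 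After this cancellation, $\p_Y E_3(\p_Y u)$ and $\p_{YY} E_3(\p_Y u)$ have the same structure as the original representation but with $u$ replaced respectively by $\p_Y u$ and $\p_Y^2 u$ at the shifted points, which remain controlled by $|u|_{l,\r',\th',\mu'}$ since the norm permits $\a_1\leq 2$. Summing over all admissible $(\a_1,\a_2)$ with $\a_1\leq 2$ and $\a_1+\a_2\leq l$ then gives the stated inequality.

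The hard part is precisely this cancellation of the boundary traces $u(0,s)$ and $\p_Y u(0,s)$ under $\p_Y$ and $\p_Y^2$. Without the exact matching between $\p_Y$ of the Gaussian prefactor and $\p_Y$ of the integration limits, the factor $Y/(t-s)$ would produce a non-integrable $(t-s)^{-1}$ singularity in $s$ and destroy the claimed $(t-s)^{-1/2}$ bound. A secondary but routine issue is propagating the exponential weight $e^{\mu'\Re Y}$ through the shifted arguments $\pm Y+\sqrt{4(t-s)}\eta$ in the complex sector $\Sigma(\th')$, which proceeds by the same weight manipulation as in the last part of the proof of Lemma~\ref{dYu}.
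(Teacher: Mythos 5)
Your proposal is correct and follows essentially the same route as the paper: integrate by parts in $Y'$ to move the derivative onto the kernel, pick up the boundary term $-2E(Y,t-s)u(x,0,s)$, and extract the $(t-s)^{-1/2}$ factor via the Gaussian substitution, with the weight handled as in Lemma \ref{dYu}. Your treatment of $\a_1=1,2$ by differentiating the representation and cancelling the boundary traces is just a reorganization of the paper's argument, where for $\a_1=2$ one derivative is transferred back to $u$ and the boundary term vanishes because $\p_Y[-E(Y-Y',t-s)+E(Y+Y',t-s)]$ is zero at $Y'=0$ — the same cancellation you identify.
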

\begin{proof}
\begin{equation}
\begin{split}
&\sup_{Y\in\Sigma(\th')}e^{\mu'\Re Y}|\p_Y^{\a_1}\p_x^{\a_2}\int^t_0ds\int^\infty_0[E(Y-Y',t-s)\\
&\qquad\qquad\qquad\qquad\qquad\qquad\quad +E(Y+Y',t-s)]\p_{Y'}u(x,Y',s)dY'|_{0,\r'}\\
=&\sup_{Y\in\Sigma(\th')}e^{\mu'\Re Y}|\int^t_0ds\int^\infty_0\p_Y^{\a_1}[E(Y-Y',t-s)\\
&\qquad\qquad\qquad\qquad\qquad\qquad+E(Y+Y',t-s)]\p_x^{\a_2}\p_{Y'}u(x,Y',s)dY'|_{0,\r'}\\
\doteq & I.
\end{split}
\end{equation}
When $\a_1=0$, we have that:

\begin{equation}
\begin{split}
I =&\sup_{Y\in\Sigma(\th')}e^{\mu'\Re Y}\mid\int^t_0ds\Big(-\int^\infty_0\p_{Y'}[E(Y-Y',t-s)
+E(Y+Y',t-s)]\\
&\qquad\qquad\qquad\qquad\qquad\qquad\cdot \p_x^{\a_2}u(x,Y',s)dY'
-2E(Y,t-s)\p_x^{\a_2}u(x,0,s)\Big)\mid_{0,\r'}\\
\leq &\sup_{Y\in\Sigma(\th')}e^{\mu'\Re Y}|\int^t_0ds\int^\infty_0\frac{1}{\sqrt{4\pi(t-s)}}\frac{-(Y-Y')}{2(t-s)}
e^{-\frac{(Y-Y')^2}{4(t-s)}}\p_x^{\a_2}u(x,Y',s)dY'|_{0,\r'}\\
&+\sup_{Y\in\Sigma(\th')}e^{\mu'\Re Y}|\int^t_0ds\int^\infty_0\frac{1}{\sqrt{4\pi(t-s)}}\frac{-(Y+Y')}{2(t-s)}
e^{-\frac{(Y+Y')^2}{4(t-s)}}\p_x^{\a_2}u(x,Y',s)dY'|_{0,\r'}\\
&+2\sup_{Y\in\Sigma(\th')}e^{\mu'\Re Y}|\int^t_0ds\frac{1}{\sqrt{\pi(t-s)}}
e^{-\frac{Y^2}{4(t-s)}}\p_x^{\a_2}u(x,0,s)|_{0,\r'}\\
\doteq & I_1+I_2+2I_3,
\end{split}
\end{equation}
where
\begin{equation}
\begin{split}
I_1=&\sup_{Y\in\Sigma(\th')}e^{\mu'\Re Y}|\int^t_0ds\int^\infty_0\frac{1}{\sqrt{4\pi(t-s)}}\frac{-(Y-Y')}{2(t-s)}
e^{-\frac{(Y-Y')^2}{4(t-s)}}\p_x^{\a_2}u(x,Y',s)dY'|_{0,\r'}\\
=&\sup_{Y\in\Sigma(\th')}e^{\mu'\Re Y}|\int^t_0ds\frac{1}{\sqrt{t-s}}
\int^\infty_{-\frac{Y}{\sqrt{4(t-s)}}}\eta e^{-\eta^2}\p_x^{\a_2}u(x,\sqrt{4(t-s)}\eta+Y,s)d\eta|_{0,\r'}\\
=&\sup_{Y\in\Sigma(\th')}|\int^t_0ds\frac{1}{\sqrt{t-s}}
\int^\infty_{-\frac{Y}{\sqrt{4(t-s)}}}\eta e^{-\eta^2-\mu'\sqrt{4(t-s)}\eta}e^{\mu'(\sqrt{4(t-s)}\eta+Y)}\\
&\;\;\;\;\;\;\;\;\;\;\;\;\;\;\;\;\;\;\;\;\;\;\;\;\;\;\;\;\;\;\;\;\;\;\;\;\;\;\;\;\;\;\;\;\;\;\;\p_x^{\a_2}u(x,\sqrt{4(t-s)}\eta+Y,s)d\eta|_{0,\r'}\\
\leq &C\int_0^tds\frac{1}{\sqrt{t-s}}\sup_{Y\in\Sigma(\th')}|e^{\mu' Y}\p_x^{\a_2}u(x,Y,s)|_{0,\r'}|\int^\infty_{-\frac{Y}{\sqrt{4(t-s)}}}\eta e^{-\eta^2-\mu'\sqrt{4(t-s)}\eta}d\eta|\\
\leq& C\int_0^tds\frac{1}{\sqrt{t-s}}|u(s)|_{l,\r',\th',\mu'},
\end{split}
\end{equation}
\begin{equation}
\begin{split}
I_2=&\sup_{Y\in\Sigma(\th')}e^{\mu'\Re Y}|\int^t_0ds\int^\infty_0\frac{1}{\sqrt{4\pi(t-s)}}\frac{-(Y+Y')}{2(t-s)}
e^{-\frac{(Y+Y')^2}{4(t-s)}}\p_x^{\a_2}u(x,Y',s)dY'|_{0,\r'}\\
=&\sup_{Y\in\Sigma(\th')}e^{\mu'\Re Y}|\int^t_0ds\frac{1}{\sqrt{t-s}}
\int^\infty_{\frac{Y}{\sqrt{4(t-s)}}}\eta e^{-\eta^2}\p_x^{\a_2}u(x,\sqrt{4(t-s)}\eta-Y,s)d\eta|_{0,\r'}\\
=&\sup_{Y\in\Sigma(\th')}|\int^t_0ds\frac{1}{\sqrt{t-s}}
\int^\infty_{\frac{Y}{\sqrt{4(t-s)}}}\eta e^{-\eta^2-\mu'\sqrt{4(t-s)}\eta+2\mu Y}e^{\mu'(\sqrt{4(t-s)}\eta-Y)}\\
&\;\;\;\;\;\;\;\;\;\;\;\;\;\;\;\;\;\;\;\;\;\;\;\;\;\;\;\;\;\;\;\;\;\;\;\;\;\;\;\;\;\;\;\;\;\;\;\p_x^{\a_2}u(x,\sqrt{4(t-s)}\eta-Y,s)d\eta|_{0,\r'}\\
\leq &C\int_0^tds\frac{1}{\sqrt{t-s}}\sup_{Y\in\Sigma(\th')}|e^{\mu' Y}\p_x^{\a_2}u(x,Y,s)|_{0,\r'}|\int^\infty_{\frac{Y}{\sqrt{4(t-s)}}}\eta e^{-\eta^2-\mu'\sqrt{4(t-s)}\eta+2\mu' Y}d\eta|\\
\leq& C\int_0^tds\frac{1}{\sqrt{t-s}}|u(s)|_{l,\r',\th',\mu'},
\end{split}
\end{equation}
\begin{equation}
\begin{split}
I_3=&\sup_{Y\in\Sigma(\th')}e^{\mu'\Re Y}|\int^t_0ds\frac{1}{\sqrt{\pi(t-s)}}
e^{-\frac{Y^2}{4(t-s)}}\p_x^{\a_2}u(x,0,s)|_{0,\r'}\\
\leq & C\int_0^tds\frac{1}{\sqrt{t-s}}|\p^{\a_2}_xu(x,0,s)|_{0,\r'}\sup_{Y\in\Sigma(\th')}|e^{\mu' Y}e^{-\frac{Y^2}{4(t-s)}}|\\
\leq & C\int_0^tds\frac{1}{\sqrt{t-s}}|\p^{\a_2}_xu(x,0,s)|_{0,\r'}\\
\leq& C\int_0^tds\frac{1}{\sqrt{t-s}}|u(s)|_{l,\r',\th',\mu'}.
\end{split}
\end{equation}
When $\a_1=1,$ we do not need to transfer the derivative with respect to $Y$, then we could have that
\begin{equation}
\begin{split}
I
=&\sup_{Y\in\Sigma(\th')}e^{\mu'\Re Y}\mid\int^t_0ds\int^\infty_0\p_{Y}[E(Y-Y',t-s)\\
&\;\;\;\;\;\;\;\;\;\;\;\;\;\;\;\;\;\;\;\;\;\;\;\;\;\;\;\;\;\;\;\;\;\;\;\;\;\;\;\;\;\;
+E(Y+Y',t-s)]\p_{Y'}\p_x^{\a_2}u(x,Y',s)dY'|_{0,\r'}\\
\leq & C\int_0^tds \frac{1}{\sqrt{t-s}}|u(s)|_{l,\r',\th',\mu'}.
\end{split}
\end{equation}
when $\a_1=2,$
\begin{equation}
\begin{split}
I=&\sup_{Y\in\Sigma(\th')}e^{\mu'\Re Y}\mid\int^t_0ds\int^\infty_0\p_{Y}^2[E(Y-Y',t-s)\\
&\;\;\;\;\;\;\;\;\;\;\;\;\;\;\;\;\;\;\;\;\;\;\;\;\;\;\;\;\;\;\;\;\;\;\;\;\;\;\;\;\;\;
+E(Y+Y',t-s)]\p_{Y'}\p_x^{\a_2}u(x,Y',s)dY'|_{0,\r'}\\
=&\sup_{Y\in\Sigma(\th')}e^{\mu'\Re Y}\mid\int^t_0ds\int^\infty_0\p_{Y'}\p_{Y}[-E(Y-Y',t-s)\\
&\;\;\;\;\;\;\;\;\;\;\;\;\;\;\;\;\;\;\;\;\;\;\;\;\;\;\;\;\;\;\;\;\;\;\;\;\;\;\;\;\;\;
+E(Y+Y',t-s)]\p_{Y'}\p_x^{\a_2}u(x,Y',s)dY'|_{0,\r'}\\
=&\sup_{Y\in\Sigma(\th')}e^{\mu'\Re Y}\mid\int^t_0ds\Big(-\int^\infty_0\p_{Y}[-E(Y-Y',t-s)\\
&\;\;\;\;\;\;\;\;\;\;\;\;\;\;\;\;\;\;\;\;\;\;\;\;\;\;\;\;\;\;\;\;\;\;\;\;\;\;\;\;\;\;
+E(Y+Y',t-s)]\p_{Y'}^2\p_x^{\a_2}u(x,Y',s)dY'\Big)|_{0,\r'}\\
\leq& C\int_0^tds\frac{1}{\sqrt{t-s}}|u(s)|_{l,\r',\th',\mu'},
\end{split}
\end{equation}
here the boundary term vanishes because of the two factors of $E$. Then we complete
the proof of this lemma.
\end{proof}

Next, we estimate $E_3(t)u$.
\begin{lemma}\label{e32l}
Assume that $u(t)\in K^{l,\r-\b t,\th-\b t, \mu-\b t}$, for $0\leq t\leq T,$ satisfying $\p_Yu\mid_{Y=0}=0.$ Then
$\forall \r'<\r-\b t,\th'<\th-\b t,\mu'<\mu-\b t$, we have that
\begin{equation}
|E_3(t)u|_{l,\r',\th',\mu'}\leq \int^t_0 ds|u(s)|_{l,\r',\th',\mu'}\leq C|u|_{l,\r,\th,\mu,\b,T}.
\end{equation}
\end{lemma}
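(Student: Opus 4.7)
The key observation is that $E_3$ is precisely the Duhamel operator associated with the Neumann heat semigroup $E_1$: unrolling the definitions gives
$$E_3 u(\cdot,\cdot,t) = \int_0^t E_1(t-s)\, u(\cdot,\cdot,s)\, ds.$$
Thus the natural strategy is to bound $E_1(t-s) u(s)$ in the $K^{l,\r',\th',\mu'}$ norm pointwise in $s$ and then integrate over $s\in[0,t]$.

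Since $u(\cdot,\cdot,s)$ satisfies the Neumann compatibility $\p_Y u(s)|_{Y=0}=0$ for every $s$, Lemma \ref{dYu} applies at each fixed $s$ with $u$ replaced by $u(s)$ and $t$ replaced by $t-s$, and yields, with a constant $C$ independent of $t-s$,
$$|E_1(t-s)\,u(s)|_{l,\r',\th',\mu'} \le C\,|u(s)|_{l,\r',\th',\mu'}.$$
Uniformity in $t-s$ is visible from the proof of Lemma \ref{dYu}: the Gaussian-tail bounds $\int e^{-\eta^2-\mu\sqrt{4(t-s)}\eta}d\eta$ and their analogues depend only on $\th$ and $\mu$, not on the time argument. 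Passing the $K^{l,\r',\th',\mu'}$ norm under the $s$-integral (a standard Minkowski step) then produces
$$|E_3(t) u|_{l,\r',\th',\mu'} \le C\int_0^t |u(s)|_{l,\r',\th',\mu'}\, ds,$$
which is the first inequality of the lemma (the absolute constant is harmlessly absorbed into the $C$ of the second inequality).

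The second inequality follows from the monotonicity of the $K^{l,\cdot,\cdot,\cdot}$ norms in the parameter triple: for every $s \in [0,T]$ we have $\r' < \r - \b s$, $\th' < \th - \b s$, $\mu' < \mu - \b s$, so
$$|u(s)|_{l,\r',\th',\mu'} \le |u(s)|_{l,\r-\b s,\th-\b s,\mu-\b s} \le |u|_{l,\r,\th,\mu,\b,T},$$
and the $s$-integration over $[0,T]$ contributes at most a factor $T$.

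I do not foresee a real obstacle here: the only place any care is required is in reading the proof of Lemma \ref{dYu} to verify that its constant is independent of $t-s$, and in checking that the hypothesis $\p_Y u|_{Y=0}=0$ (needed there only in the $\a_1=2$ step to kill a boundary contribution from integration by parts in $Y'$) is indeed available for every $s$. Note that, unlike Lemma \ref{e31l}, no singular factor $1/\sqrt{t-s}$ arises, because here no $Y$-derivative has to be transferred onto the kernel itself.
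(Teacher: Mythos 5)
Your proposal is correct and is essentially the paper's argument: the paper proves the bound by redoing, inside the $s$-integral, exactly the weighted Gaussian estimates of Lemma \ref{dYu} (transferring $\p_Y^{\a_1}$ onto $u$, with the $\a_1=1$ boundary term cancelling by the even-reflection kernel and the $\a_1=2$ one vanishing by $\p_Y u|_{Y=0}=0$), which is what you invoke in modular form through the identity $E_3u(t)=\int_0^t E_1(t-s)u(s)\,ds$ together with the time-uniform constant in Lemma \ref{dYu} and monotonicity of the norms in $(\r,\th,\mu)$. Your remark that, unlike Lemma \ref{e31l}, no $1/\sqrt{t-s}$ singularity appears also matches the paper.
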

\begin{proof}
\begin{equation}
\begin{split}
&\sup_{Y\in\Sigma(\th')}e^{\mu'\Re Y}|\p_Y^{\a_1}\p_x^{\a_2}\int^t_0ds\int^\infty_0[E(Y-Y',t-s)\\
&\qquad\qquad\qquad\qquad\qquad\qquad\;\;\;\;+E(Y+Y',t-s)]u(x,Y',s)dY'|_{0,\r'}\\
=&\sup_{Y\in\Sigma(\th')}e^{\mu'\Re Y}|\int^t_0ds\int^\infty_0[E(Y-Y',t-s)\\
&\qquad\qquad\qquad\qquad\qquad\quad+E(Y+Y',t-s)]\p_Y^{\a_1}\p_x^{\a_2}u(x,Y',s)dY'|_{0,\r'}\\
\end{split}
\end{equation}
for $\a_1=1,$ the boundary term vanishes because of the two factors of $E$, for $\a_1=2,$
the boundary term vanishes because of the condition $\p_Yu\mid_{Y=0}=0.$ Then
\begin{equation}
\begin{split}
&\sup_{Y\in\Sigma(\th')}e^{\mu'\Re Y}|\int^t_0ds\int^\infty_0E(Y-Y',t-s)
\p_Y^{\a_1}\p_x^{\a_2}u(x,Y',s)dY'|_{0,\r'}\\
=&\sup_{Y\in\Sigma(\th')}e^{\mu'\Re Y}|\int^t_0ds\int^\infty_0\frac{1}{\sqrt{4\pi(t-s)}}
e^{-\frac{(Y-Y')^2}{4(t-s)}}
\p_Y^{\a_1}\p_x^{\a_2}u(x,Y',s)dY'|_{0,\r'}\\
=&\sup_{Y\in\Sigma(\th')}e^{\mu'\Re Y}|\int^t_0ds
\int^\infty_{-\frac{Y}{\sqrt{4(t-s)}}}\eta e^{-\eta^2}\p_x^{\a_2}u(x,\sqrt{4(t-s)}\eta+Y,s)d\eta|_{0,\r'}\\
\leq &\sup_{Y\in\Sigma(\th')}|\int^t_0ds
\int^\infty_{-\frac{Y}{\sqrt{4(t-s)}}}\eta e^{-\eta^2-\mu'\sqrt{4(t-s)}\eta}e^{\mu'(\sqrt{4(t-s)}\eta+Y)}\p_x^{\a_2}u(x,\sqrt{4(t-s)}\eta+Y,s)d\eta|_{0,\r'}\\
\leq &C \int^t_0 ds|u(s)|_{l,\r',\th',\mu'}.
\end{split}
\end{equation}
Similarly, we could get the estimate of the other factor of $E_3,$ and complete the proof of this lemma.
\end{proof}

By the definition of the norm of $K^{l,\r,\th,\mu}_{\b,T}$ and the fact that
$\p_t E_3(t)u=\p_{YY}E_3(t)u+u,$ we could have the following estimate:
\begin{lemma}
Let $u\in K^{l,\r,\th,\mu}_{\b,T},$ then $E_3(t) u\in K^{l,\r,\th,\mu}_{\b,T},$ and
\begin{equation}
|E_3(t)u|_{l,\r,\th,\mu,\b,T}\leq C|u|_{l,\r,\th,\mu,\b,T}
\end{equation}
\end{lemma}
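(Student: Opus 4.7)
The plan is to reduce the bound on the full time-dependent norm $|E_3(t)u|_{l,\r,\th,\mu,\b,T}$ to the pointwise-in-$t$ bound already established in Lemma \ref{e32l}, using the heat identity $\p_t E_3(t)u = \p_{YY}E_3(t)u + u$ to deal with the time derivative component of the norm. Concretely, I would unpack the definition of $|\cdot|_{l,\r,\th,\mu,\b,T}$ into its two pieces (spatial derivatives up to order $(\a_1,\a_2)$ with $\a_1\leq 2$, $\a_1+\a_2\leq l$, and a single time derivative combined with $\p_x^\a$ for $\a\leq l-2$) and estimate each piece separately.

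First I would handle the purely spatial part. Fix $t\in[0,T]$ and set $\r'=\r-\b t$, $\th'=\th-\b t$, $\mu'=\mu-\b t$. Since $E_3$ commutes with $\p_x^{\a_2}$, applying Lemma \ref{e32l} to $\p_x^{\a_2}u$ gives, for any $\a_1\leq 2$,
\[
|\p_Y^{\a_1}\p_x^{\a_2}E_3(t)u|_{0,\r',\th',\mu'} \;\leq\; \int_0^t |u(s)|_{l,\r',\th',\mu'}\,ds.
\]
Because the norms are monotone in $\r,\th,\mu$ and $\r-\b s\geq\r-\b t=\r'$ for $s\leq t$ (and similarly for $\th,\mu$), one has $|u(s)|_{l,\r',\th',\mu'}\leq |u(s)|_{l,\r-\b s,\th-\b s,\mu-\b s}\leq |u|_{l,\r,\th,\mu,\b,T}$. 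Integration in $s\in[0,t]$ then yields a bound by $T\,|u|_{l,\r,\th,\mu,\b,T}$, uniformly in $t\in[0,T]$.

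Next I would handle the time derivative component. For $\a\leq l-2$, the identity $\p_t E_3(t)u=\p_{YY}E_3(t)u+u$ (which follows directly from the defining PDE for $E_3$) gives
\[
\p_t\p_x^\a E_3(t)u \;=\; \p_{YY}E_3(t)\p_x^\a u + \p_x^\a u.
\]
The first term is estimated exactly as above (the $\a_1=2$ case of Lemma \ref{e32l} applied to $\p_x^\a u$), and the second is bounded directly by $|u|_{l,\r,\th,\mu,\b,T}$. Summing over $\a_1,\a_2$ and $\a$ and taking the supremum over $t\in[0,T]$ gives the claimed inequality with a constant $C=C(T)$.

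The main obstacle to verify is simply that the invocation of Lemma \ref{e32l} with $\a_1=2$ is legitimate: that lemma requires the vanishing Neumann trace $\p_Y u|_{Y=0}=0$ in order to perform the integration by parts producing $\p_{Y'}^2 u$. This hypothesis is the implicit working assumption throughout this section (it is precisely the condition that makes the Neumann reduction of Section 2 compatible with $E_3$); once granted, the argument above is essentially mechanical and the constant $C$ depends only on $T$, not on $u$.
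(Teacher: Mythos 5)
Your proposal follows essentially the same route as the paper, which justifies this lemma in one line by combining the pointwise-in-$t$ estimate of Lemma \ref{e32l} with the identity $\p_t E_3(t)u=\p_{YY}E_3(t)u+u$ to control the time-derivative part of the $K^{l,\r,\th,\mu}_{\b,T}$ norm. The boundary-trace condition $\p_Y u|_{Y=0}=0$ that you correctly flag as needed for the $\a_1=2$ case is likewise left implicit in the paper's statement, so your argument is the paper's argument made explicit.
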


\section{Proof of proposition \ref{pro1}}

The solution $u$ of (\ref{p2e})
can be written as:
\begin{equation}
\begin{split}
u=&E_1(t)u_0+E_2(U(x,t))+E_3(t)(K(u,t)+2\p_Y u)\\
=&E_1(t)(u_0-YU(x,0))+YU(x,0)+E_2(U(x,t)-U(x,0))\\&+E_3(t)(K(u,t)+2\p_Y u)\\
\doteq& \mathcal{U}+E_3(t)(K(u,t)+2\p_Y u)\\
\doteq & F(u,t).
\end{split}
\end{equation}

The rest is devoted to proving that $F(u,t)$ satisfies all the conditions
of the ACK Theorem with $X=K^{l,\r,\th,\mu+1}$.

It is obvious that $F$ satisfies the first condition of the ACK theorem
which says that $F$ has some continuation with respect to time $t$. The
following proposition shows that $F$ satisfies the second condition (\ref{ack1})
of the ACK theorem.
\begin{proposition}
Assume that $u_0(x,Y)\in K^{l,\r,\th,\mu+1},U\in K^{l,\r}_{\b,T},$ satisfy
$\p_Y u_0\mid_{Y=0}=U(x,0),$
then we have that
$\mathcal{U}\in K^{l,\r,\th,\mu+1}_{\b,T}$ satisfying
\begin{equation}
|\mathcal{U}|_{l,\r,\th,\mu+1,\b,T}\leq C(|u_0|_{l,\r,\th,\mu+1}+|U|_{l,\r,\b,T}).
\end{equation}
\end{proposition}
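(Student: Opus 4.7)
The plan is to split $\mathcal{U}$ into two pieces on which the heat-operator estimates from the previous section act directly. Writing
\[
\mathcal{U} = \bigl[E_1(t)(u_0 - Y U(x,0)) + Y U(x,0)\bigr] + E_2\bigl(U(x,t) - U(x,0)\bigr),
\]
the two summands match exactly the hypotheses of Lemma \ref{e11l} and Lemma \ref{e21l}, respectively, so the proposition reduces to invoking those lemmas and summing the results.

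For the first summand, I apply Lemma \ref{e11l} with the choices $u := u_0$ and $f := U(x,0)$. The compatibility hypothesis $\partial_Y u_0|_{Y=0} = U(x,0)$ assumed in the statement of the proposition is precisely the hypothesis $\partial_Y u|_{Y=0} = f$ required by that lemma. Since the conclusion of Lemma \ref{e11l} is uniform in the exponential weight, I run it at level $\mu+1$ and obtain
\[
\bigl|E_1(t)(u_0 - YU(x,0)) + YU(x,0)\bigr|_{l,\rho,\theta,\mu+1,\beta,T} \leq C\bigl(|u_0|_{l,\rho,\theta,\mu+1} + |U(x,0)|_{l,\rho}\bigr),
\]
and $|U(x,0)|_{l,\rho} \leq |U|_{l,\rho,\beta,T}$ is immediate from the definition of the space $K^{l,\rho}_{\beta,T}$.

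For the second summand, set $\phi(x,t) := U(x,t) - U(x,0)$, so that $\phi|_{t=0} = 0$. Writing $\phi(x,t) = \int_0^t \partial_s U(x,s)\,ds$ and applying the triangle inequality in $H^{l,\rho-\beta t}$, together with the identity $\partial_t\phi = \partial_t U$ for the $j=1$ part of the norm, yields $|\phi|_{l,\rho,\beta,T} \leq C|U|_{l,\rho,\beta,T}$. Lemma \ref{e21l} (again with $\mu$ replaced by $\mu+1$) then gives
\[
|E_2\phi|_{l,\rho,\theta,\mu+1,\beta,T} \leq C|\phi|_{l,\rho,\beta,T} \leq C|U|_{l,\rho,\beta,T}.
\]
Summing the two bounds produces the desired estimate. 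There is essentially no obstacle here: all the analytic work has already been absorbed into Lemmas \ref{e11l} and \ref{e21l}, and the role of the present proposition is merely to package the linear part of the fixed-point equation so that it falls under their hypotheses. The only points that deserve explicit verification are the two algebraic identities that subtracting $YU(x,0)$ from $u_0$ restores the homogeneous Neumann compatibility required by the $E_1$ estimate, and that subtracting $U(x,0)$ from $U(x,t)$ produces the vanishing initial datum required by the $E_2$ estimate; both are visible by direct inspection.
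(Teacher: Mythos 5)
Your argument is correct and coincides with the paper's own proof: the authors likewise write $\mathcal{U}=E_1(t)(u_0-YU(x,0))+YU(x,0)+E_2(U(x,t)-U(x,0))$ and conclude directly from Lemma \ref{e11l} (applied with $u=u_0$, $f=U(x,0)$ at weight $\mu+1$, using the compatibility condition $\p_Y u_0|_{Y=0}=U(x,0)$) and Lemma \ref{e21l} (applied to $\phi=U(x,t)-U(x,0)$, which vanishes at $t=0$). Your additional remarks checking $|U(x,0)|_{l,\rho}\leq |U|_{l,\rho,\b,T}$ and $|\phi|_{l,\rho,\b,T}\leq C|U|_{l,\rho,\b,T}$ simply make explicit what the paper leaves implicit.
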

As
\begin{equation}
\mathcal{U}=E_1(t)(u_0-YU(x,0))+YU(x,0)+E_2(U(x,t)-U(x,0)),
\end{equation}
then by Lemma \ref{e11l} and Lemma \ref{e21l}, we could obtain this proposition.

Next we shall prove that the operator $F$ satisfies the
last condition (\ref{ack2}) of the ACK Theorem. That is the following proposition.
\begin{proposition}\label{pro1}
$0<\r'<\r(s)\leq \r_0-\b_0 s, 0<\th'<\th(s)\leq \th_0-\b_0 s,
0<\mu'+1<\mu(s)+1\leq \mu_0+1-\b_0 s,$ and
$\forall u_1(t),u_2(t)\in K^{l,\r-\b_0 t,\th-\b_0 t, \mu+1-\b_0 t}$, satisfy
\begin{equation}
\sup_{0\leq t\leq T}|u_j|_{l,\r_0-\b_0 t,\th_0-\b_0 t,\mu_0+1-\b_0 t}\leq R,   \;\;\;\;\;j=1,2,
\end{equation}
we have
\begin{equation}
\begin{split}
&|F(t,u_1)-F(t,u_2)|_{l,\r',\th',\mu'+1}\\
\leq&C\int^t_0 ds(\frac{|u_1-u_2|_{\r(s),\th',\mu'+1}}{\r(s)-\r'}+\frac{|u_1-u_2|_{\r',\th(s),\mu'+1}}{\th(s)-\th'}\\
&\;\;\;\;\;\;\;\;\;\;+\frac{|u_1-u_2|_{\r',\th',\mu(s)+1}}{\mu(s)-\mu'}+\frac{|u_1-u_2|_{\r',\th',\mu'+1}}{\sqrt{t-s}}).
\end{split}
\end{equation}
\end{proposition}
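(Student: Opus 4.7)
The plan is to establish the Lipschitz bound by first splitting off the linear $2\p_Y u$ piece, which is the genuinely new feature of the present problem, and then expanding the nonlinear source $K(u_1)-K(u_2)$ bilinearly. Since $\mathcal{U}$ is independent of $u$, one has
\begin{equation*}
F(t,u_1) - F(t,u_2) = E_3(t)\bigl[K(u_1,t)-K(u_2,t)\bigr] + 2E_3(t)\p_Y(u_1-u_2).
\end{equation*}
With $w=u_1-u_2$, the second piece is immediately controlled by Lemma \ref{e31l} applied to $w$, producing a contribution of the form $C\int_0^t(t-s)^{-1/2}|w(s)|_{l,\r',\th',\mu'+1}\,ds$; this is exactly the $\sqrt{t-s}$ denominator required by the ACK hypothesis.

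Next I would expand the nonlinear difference. Using bilinearity in each product of $K$,
\begin{equation*}
K(u_1)-K(u_2) = -e^Y\bigl[w\,\p_x u_1 + u_2\,\p_x w\bigr] - w\,\p_x U - U\,\p_x w + w + I_1 + I_2 + Y\p_x U\,(\p_Y w + w),
\end{equation*}
where $I_1=\bigl(\int_0^Y e^{Y'}\p_x w\,dY'\bigr)(\p_Y u_1+u_1)$ and $I_2=\bigl(\int_0^Y e^{Y'}\p_x u_2\,dY'\bigr)(\p_Y w + w)$. Each term factors into a difference carrier ($w$ or one of its derivatives) and a factor uniformly controlled by the a priori bound $R$ and by $|U|_{l,\r,\b,T}$. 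Lemma \ref{e32l} then pushes $E_3$ through to yield a time integral $\int_0^t ds$ of the $K^{l,\r',\th',\mu'+1}$ norm, and Proposition \ref{proposition1} factorizes each product of norms, using that the bounded factor (containing the exponential $e^Y$ or the Volterra integral) has zero $Y$-weight thanks to the decay $u_j\in K^{l,\r,\th,\mu+1}$.

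When the derivative falls on $w$, the Cauchy-type lemmas generate the ACK denominators. For $\p_x w$ factors, Lemma \ref{lemma1} produces $C(\r(s)-\r')^{-1}|w|_{l,\r(s),\th',\mu'+1}$. For the $\p_Y w$ inside $I_2$, the multiplier $\chi(Y)=\int_0^Y e^{Y'}\p_x u_2\,dY'$ satisfies $\chi(0)=0$ and is bounded (the exponential weight is offset by the decay of $\p_x u_2$), so the first inequality of Lemma \ref{lemma2} contributes the $(\th(s)-\th')^{-1}$ and the no-loss $\mu'\,|w|_{l,\r',\th',\mu'+1}$ terms. For the $Y\p_x U\cdot\p_Y w$ piece, the second inequality of Lemma \ref{lemma2} (the $Y\p_Y$ version) applied to $w$, combined with the bound on $\p_x U\in H^{l,\r}$, supplies both $(\th(s)-\th')^{-1}$ and $(\mu(s)-\mu')^{-1}$ contributions. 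Terms without a derivative on $w$ are absorbed directly into $|w|_{l,\r',\th',\mu'+1}$.

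The main obstacle I expect is the treatment of the $\p_Y w$ factors inside the nonlinearity: unlike the isolated $2E_3(t)\p_Y w$ term, these $\p_Y w$'s are entangled with other functions, so one cannot transfer $\p_Y$ onto the heat kernel via Lemma \ref{e31l}. The remedy is to spend a Cauchy loss in $\th$ or $\mu$ through Lemma \ref{lemma2}, which is precisely the kind of denominator the vector ACK theorem accommodates; this is also the reason the statement is phrased with independent radii $\r(s),\th(s),\mu(s)$. Assembling all of the above bounds and adding them to the $\frac{1}{\sqrt{t-s}}$ contribution from Step 1 yields the four-term estimate and completes the verification of the third ACK hypothesis.
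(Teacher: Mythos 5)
Your proposal follows essentially the same route as the paper: you split off $2E_3(t)\p_Y(u_1-u_2)$ and bound it by Lemma \ref{e31l} to produce the $\frac{1}{\sqrt{t-s}}$ term, and you reduce the remaining piece to a Cauchy-type estimate of $K(u_1,t)-K(u_2,t)$ via Lemma \ref{e32l}, Proposition \ref{proposition1} and Lemmas \ref{lemma1}--\ref{lemma2} with the observation that the $e^{Y}$ factors are absorbed by the extra weight $\mu+1$, which is exactly how the paper argues (packaging the $K$-estimate as Proposition \ref{pro2} and citing Proposition 5.7 of \cite{sc}). The only point you pass over is the boundary compatibility required to apply Lemma \ref{e32l} to $K(u_1,t)-K(u_2,t)$, which the paper addresses by running the iteration on the set of functions satisfying $\p_Y u\mid_{Y=0}=U(x,t)$ so that the difference has the needed vanishing at $Y=0$.
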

{\it Proof:} $\;$ As
\begin{equation}
F(u,t)=\mathcal{U}+E_3(t)(K(u,t))+2E_3(t)\p_Y u,
\end{equation}
one can check that $\p_Y F(u,t)\mid_{Y=0}=U(x,t).$ We can get the
solution of $u=F(u,t)$ by iteration on the set of functions satisfying
$\p_Y u\mid_{Y=0}=U(x,t).$  As a result, we could assume that
\begin{equation}
\big(K(u_1,t)-K(u_2,t)\big)\mid_{Y=0}=0.
\end{equation}
 Then Proposition \ref{pro1} is a direct conclusion of Lemma \ref{e31l}, Lemma \ref{e32l} and the following Cauchy estimate of
the operator $K$.
\begin{proposition}\label{pro2}
Suppose $u_1(t),u_2(t)\in K^{l,\r_0-\b_0 t,\th_0-\b_0 t, \mu_0-\b_0 t}$, for $0\leq t\leq T.$ Then
$\forall \r'<\r''<\r_0-\b_0 t,\th'<\th''<\th_0-\b_0 t,\mu'<\mu''<\mu_0-\b_0 t$, we have that
\begin{equation}
\begin{split}
&|K(u_1,t)-K(u_2,t)|_{l,\r',\th',\mu'}\\
\leq &C\Big(\frac{|u_1-u_2|_{\r'',\th',\mu'+1}}{\r''-\r'}+\frac{|u_1-u_2|_{\r',\th'',\mu'+1}}{\th''-\th'}\\
&\;\;\;\;\;\;\;\;\;\;+\frac{|u_1-u_2|_{\r',\th',\mu''+1}}{\mu''-\mu'}+|u_1-u_2|_{\r',\th',\mu'+1}\Big).
\end{split}
\end{equation}
\end{proposition}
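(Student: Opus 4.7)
The plan is to expand $K(u_1,t) - K(u_2,t)$ as a sum of multilinear terms each containing exactly one factor of the difference $w := u_1 - u_2$ (or a derivative of $w$), and then bound each summand by combining Proposition~\ref{proposition1} with the Cauchy estimates in Lemmas~\ref{lemma1} and~\ref{lemma2}. Explicitly, the decomposition produces
\begin{equation*}
\begin{split}
K(u_1,t)-K(u_2,t) &= -e^Y w\,\p_x u_1 - e^Y u_2\,\p_x w - w\,\p_x U - U\,\p_x w + w\\
& \quad + \Big(\int_0^Y e^{Y'}\p_x w\,dY'\Big)(\p_Y u_1 + u_1) \\
& \quad + \Big(\int_0^Y e^{Y'}\p_x u_2\,dY'\Big)(\p_Y w + w) + Y\p_x U\,(\p_Y w + w),
\end{split}
\end{equation*}
which I would estimate term by term.

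For the purely multiplicative pieces, Proposition~\ref{proposition1} reduces everything to estimating individual factors in their natural spaces. The exponential weight in terms such as $e^Y u_2\p_x w$ is absorbed because the $u_j$ have decay of order $\mu'+1$, so $e^Y u_j$ sits naturally in the $\mu'$-weighted space where the product rule applies. Whenever $w$ carries an $x$-derivative, Lemma~\ref{lemma1} produces the factor $(\r''-\r')^{-1}$; whenever $w$ carries a $Y$-derivative, Lemma~\ref{lemma2} produces the factors $(\th''-\th')^{-1}$ and $(\mu''-\mu')^{-1}$, together with a leftover zero-order piece that is absorbed into the $|w|_{\r',\th',\mu'+1}$ term on the right-hand side. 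The linear summand $w$, the term $w\,\p_x U$, and the term $Y\p_x U\,(\p_Y w + w)$ (using that $Ye^{-\mu'\Re Y}$ is bounded on $\Sigma(\th')$ for any $\mu'>0$, so $Y\p_x U$ effectively sits in the $\mu'$-weighted space) are all controlled directly in the same way.

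The main new obstacle compared with~\cite{sc} is the nonlocal integral $\int_0^Y e^{Y'}\p_x u\,dY'$, which inherits the extra exponential weight produced by the change of variables $u = e^{-Y}\tilde u^P$. My strategy is to commute $\p_x$ out of the integral and estimate the integrand pointwise: since $w$ has $(\mu'+1)$-decay, the integrand $e^{Y'}w(x,Y',t)$ decays like $e^{-\mu'\Re Y'}$ uniformly on the sector $\Sigma(\th')$, so a direct bound shows that $\int_0^Y e^{Y'} w(x,Y',t)\,dY'$ lies in $K^{l,\r'',\th',\mu'}$ with norm at most $C|w|_{\r'',\th',\mu'+1}$. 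Applying Lemma~\ref{lemma1} to the outer $\p_x$ contributes $(\r''-\r')^{-1}$, and Proposition~\ref{proposition1} applied to the product with $\p_Y u_1 + u_1 \in K^{l,\r',\th',\mu'+1}$ delivers a contribution of the form $\frac{C|w|_{\r'',\th',\mu'+1}}{\r''-\r'}$. The symmetric term $\bigl(\int_0^Y e^{Y'}\p_x u_2\,dY'\bigr)(\p_Y w + w)$ is handled in the same manner, except that the $\p_Y$ falling on $w$ is controlled by Lemma~\ref{lemma2}, which is where the $\th$- and $\mu$-difference factors on the right-hand side originate. Collecting all contributions yields the stated estimate.
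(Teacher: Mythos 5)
Your decomposition of $K(u_1,t)-K(u_2,t)$ into terms linear in $w=u_1-u_2$ and your overall strategy (Proposition~\ref{proposition1} combined with the Cauchy estimates of Lemmas~\ref{lemma1} and~\ref{lemma2}, with the exponential weight absorbed by the $\mu+1$ decay) are exactly the route the paper takes; the paper simply outsources the details to Proposition 5.7 of \cite{sc} plus the remark that $e^{Y}u\in K^{l,\rho,\theta,\mu}$ when $u\in K^{l,\rho,\theta,\mu+1}$. However, two of your intermediate claims are not correct as written, and the second one skips over the only genuinely delicate point. First, $\int_0^Y e^{Y'}w(x,Y',t)\,dY'$ does \emph{not} lie in $K^{l,\rho'',\theta',\mu'}$: the integral converges to a generically nonzero limit as $\Re Y\to\infty$, so it has no exponential decay and belongs only to the unweighted space $K^{l,\rho'',\theta',0}$, with norm $\leq C|w|_{l,\rho'',\theta',\mu'+1}$. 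This is harmless, because Proposition~\ref{proposition1} is formulated precisely for the product of an unweighted factor with a weighted one (the decay being supplied by the other factor), but the membership should be stated correctly.

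Second, and more seriously, you invoke $\p_Y u_1+u_1\in K^{l,\rho',\theta',\mu'+1}$ in order to apply Proposition~\ref{proposition1}. This is unjustified: the $K^{l}$-norm of $\p_Y u_1$ involves $\p_Y^3 u_1$ (these spaces control only two $Y$-derivatives), and the hypothesis on $u_1$ gives no bound on that quantity. To handle $\bigl(\int_0^Y e^{Y'}\p_x w\,dY'\bigr)(\p_Y u_1+u_1)$ one must pay a Cauchy loss in $\theta$ (or $\mu$) on $u_1$ via Lemma~\ref{lemma2}, using that the coefficient vanishes at $Y=0$, and one must then verify, by tracking how the derivatives distribute in the Leibniz expansion, that this loss never occurs in the same term as the $(\rho''-\rho')^{-1}$ loss you place on $w$: when the extra $x$-derivative is forced onto the integral factor (producing $\p_x^{l+1}w$), the factor $\p_Y u_1+u_1$ is differentiated at low order, and when two $Y$-derivatives fall on $\p_Y u_1$, the $w$-factor stays within level $l$ at radius $\rho'$. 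Without this bookkeeping your argument produces a doubly singular bound of the type $|w|_{\rho'',\theta',\mu'+1}\big/\bigl((\rho''-\rho')(\theta''-\theta')\bigr)$, which is not of the form allowed by the ACK hypothesis \eqref{ack2}; with it, one obtains exactly the right-hand side of the proposition. The same remark applies to the symmetric term $\bigl(\int_0^Y e^{Y'}\p_x u_2\,dY'\bigr)(\p_Y w+w)$, where the $x$-loss is taken on $u_2$ and the $\theta$-, $\mu$-losses on $w$. This bookkeeping is the substance of Proposition 5.7 of \cite{sc} that the paper omits, and it is the piece missing from your proposal.
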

This proposition could be proved by using Proposition \ref{proposition1}, Lemma \ref{lemma1} and
Lemma \ref{lemma2}. The proof  is the same as the proof of Proposition 5.7 in \cite{sc}, only noticing
the fact that  $e^Yu\in K^{l,\r,\th,\mu}$ if $u\in K^{l,\r,\th,\mu+1}.$ We omit the details here.

Then by the ACK Theorem, there exists a unique solution $u$ of the equation satisfies
\begin{equation}
|u(t)|_{l,\r_1-\b_1 t,\th_1-\b_1 t,\mu_1-\b_1 t}\leq C,\quad
\end{equation}
By the equation (\ref{p2e}),we have
\begin{equation}
\p_tu=\p_{YY}u+K(u,t)+2\p_Yu,
\end{equation}
then we obtain $u\in K^{l,\r_1,\th_1+1,\mu_1}_{\b_1,T}$ and complete the proof of Proposition
\ref{pro21}.\\

\noindent{\bf Acknowledgment:}
Ning Jiang was supported by a grant from the National Natural Science Foundation of China under contract  No. 11171173. Yutao Ding was supported by the postdoctoral funding of Mathematical Science Center of Tsinghua University.

\end{document}